\documentclass{amsart}

\usepackage{amsmath, amssymb, amsthm, thmtools, amscd, amssymb, color, latexsym, tikz, hyperref, enumitem, float}
\usepackage[margin=1in]{geometry}
\usepackage[capitalize,noabbrev]{cleveref}
\usetikzlibrary{calc, external, fit, shapes, arrows.meta}

\numberwithin{equation}{section}
\newtheorem{theorem}{Theorem}[section]

\newtheorem{lemma}[theorem]{Lemma}
\newtheorem{question}[theorem]{Question}

\newtheorem{conjecture}[theorem]{Conjecture}
\crefname{conjecture}{Conjecture}{Conjectures}

\newtheorem*{theorem*}{Theorem}
\newtheorem*{proposition*}{Proposition}
\newtheorem*{corollary*}{Corollary}
\newtheorem*{lemma*}{Lemma}
\newtheorem*{question*}{Question}
\newtheorem*{claim*}{Claim}
\newtheorem*{conjecture*}{Conjecture}

\theoremstyle{definition}

\newtheorem{example}[theorem]{Example}
\crefname{example}{Example}{Examples}
\newtheorem{remark}[theorem]{Remark}

\newtheorem*{definition*}{Definition}
\newtheorem*{example*}{Example}
\newtheorem*{remark*}{Remark}
\newtheorem*{problem*}{Problem}
\newtheorem*{assumption*}{Assumption}

\usepackage{todonotes}

\def\E{\mathop{\mathbb{E}}}

\def\N{\mathbb{N}}

\def\P{\mathbb{P}}

\def\R{\mathbb{R}}

\def\Z{\mathbb{Z}}
\def\eps{\varepsilon}

\def\Base{\operatorname{base}}

\def\oto{\leftrightarrow}

\newcommand{\floor}[1]{\lfloor#1\rfloor}
\newcommand{\mc}[1]{\mathcal{#1}}

\newcommand{\abs}[1]{\left\lvert#1\right\rvert}
\newcommand{\set}[1]{\left\{#1\right\}}
\newcommand{\paren}[1]{\left(#1\right)}
\newcommand{\bracket}[1]{\left[#1\right]}
\newcommand{\1}[1]{1_{\left\{#1\right\}}}
\def\bra{\left<}
\def\ket{\right>}
\def\di{\,\operatorname{d}\!}
\def\Reff{\mathcal{R}_{\mathrm{eff}}}

\newcommand{\SortNoop}[1]{}

\usepackage{doi}

\begin{document}

\title[Recurrence and transience of random walks in monotonically changing environments]{Recurrence and transience of random walks in \\ monotonically changing environments}

\author[Rupert Li]{Rupert Li}
\address[]{Department of Mathematics, Stanford University, Stanford, CA 94305, USA}
\email{rupertli@stanford.edu}
\author[Jiyun Park]{Jiyun Park}
\address[]{Department of Mathematics, Stanford University, Stanford, CA 94305, USA}
\email{jiyunp@stanford.edu}

\begin{abstract}
Let $(c_t)_{t\geq 0}$ be a deterministic family of edge conductances on a countable vertex set, monotone in $t$, and let $(X_t)$ be the random walk that takes its $t$-th step using the conductances $c_t$.
We prove that if $c_t\uparrow c_\infty$ and $c_\infty$ is recurrent (respectively, $c_0$ is transient), then $(X_t)$ is almost surely recurrent (respectively, transient), i.e., visits every vertex infinitely (respectively, finitely) often.
We also establish the analogous results in continuous time.
This proves conjectures of Amir, Benjamini, Gurel-Gurevich, and Kozma, and the special case of $\set{0,1}$-valued conductances corresponds to simple random walk on a growing graph, and in this special case our results prove a conjecture of Dembo, Huang, and Sidoravicius.
In addition, we provide counterexamples to the corresponding conjectures when $(c_t)$ is monotone non-increasing: if $c_t\downarrow c_\infty$ and $c_\infty$ is transient, $(X_t)$ need not be transient, and similarly if $c_0$ is recurrent, $(X_t)$ need not be recurrent, even if $c_\infty\geq\alpha c_0$ for some $\alpha>0$. 
\end{abstract}

\maketitle

\section{Introduction}\label{sec:intro}
Consider a random walk whose environment changes as it walks: at time $t$ the walker traverses an incident edge with probability proportional to its conductance, with the edge conductances at time $t$ given by the vector $c_t$, indexed over pairs of vertices (where if there is no conductance incident to the current vertex, no move is made).
When the environment is constant in time, Rayleigh monotonicity~\cite{Rayleigh} implies that recurrence is monotone in the conductances, i.e, if two sets of edge conductances $c$ and $d$ satisfy $c(e)\leq d(e)$ for all edges $e$, then random walk corresponding to $d$ being recurrent implies random walk corresponding to $c$ is recurrent; see~\cite{LyonsPeres2016}.
Whether any form of this monotonicity survives for time-varying environments is a well-known open problem, raised independently by Dembo, Huang, and Sidoravicius~\cite{DHS2014} for growing subgraphs, i.e., the case where the $c_t$ are $\set{0,1}$-valued, and by Amir, Benjamini, Gurel-Gurevich, and Kozma~\cite{ABGK2020} for general monotone conductances.
Amir, Benjamini, Gurel-Gurevich, and Kozma~\cite[Conjecture 7.1]{ABGK2020} state the following four conjectures regarding monotonicity of recurrence and transience in monotone increasing or decreasing environments.
\begin{conjecture}[cf.\ \protect{\cite[Conjecture 7.1]{ABGK2020}}]\label{conj:inc_rec}
    Suppose random walk $(X_t)_{t\geq0}$ in changing environment corresponding to deterministic conductances $(c_t)_{t\geq0}$, i.e., at time $t$ the walker traverses an edge according to edge conductances $c_t$, has $(c_t)_{t\geq0}$ non-decreasing with limit $c_\infty$, i.e., $c_t\uparrow c_\infty$.
    If random walk corresponding to $c_\infty$ is irreducible and recurrent, then $(X_t)_{t\geq0}$ is recurrent, in the sense that almost surely the walk visits every vertex infinitely often.
\end{conjecture}
Amir, Benjamini, Gurel-Gurevich, and Kozma~\cite{ABGK2020} originally stated their conjecture under the assumption that $c_t$ is non-decreasing and bounded above by $c_\infty$, but this is equivalent (upon properly defining recurrence if $c_\infty$ is supported on a superset of the vertices that $(X_t)_{t\geq0}$ can reach) to our above statement via Rayleigh monotonicity.
Henceforth, for simplicity, we will often assume the bounding networks, e.g., $c_0$ or $c_\infty$, are irreducible; our results can all be immediately extended to remove this assumption, upon considering recurrence or transience of specific vertices.

In addition, much of the work of~\cite{ABGK2020} focuses on the \emph{adaptive} setting, where $c_t$ can depend on the motion of the random walk, i.e., on $(X_s:s\leq t)$; their statement of the above conjecture for the \emph{non-adaptive} setting allows for $(c_t)_{t\geq0}$ to be a random process, just independent of the motion of the random walk.
However, the random non-adaptive case immediately follows from the deterministic case that we stated, by simply conditioning on the process $(c_t)_{t\geq0}$; henceforth, for simplicity, we will only discuss deterministic environments $(c_t)_{t\geq0}$, but note that our results immediately generalize to the full non-adaptive setting. 
\begin{conjecture}[cf.\ \protect{\cite[Conjecture 7.1]{ABGK2020}}]\label{conj:inc_trans}
    Suppose random walk $(X_t)_{t\geq0}$ in changing environment corresponding to deterministic conductances $(c_t)_{t\geq0}$ has $(c_t)_{t\geq0}$ non-decreasing with limit $c_\infty$, i.e., $c_t\uparrow c_\infty$.
    If random walk corresponding to $c_0$ is irreducible and transient, then $(X_t)_{t\geq0}$ is transient, in the sense that almost surely the walk visits every vertex finitely often.
\end{conjecture}
The existence of the limit $c_\infty$ is clearly needed, as observed by \cite[Remark 1.9]{DHS2014}; otherwise, a simple example, such as the one discussed in \cref{ex:inc_trans_unbounded}, is to rapidly increase $c_t$ on a recurrent subset of the network, e.g., $\Z^2\subseteq\Z^3$, so that $(X_t)_{t\geq0}$ essentially behaves as a random walk on a fixed recurrent network.
\begin{conjecture}[cf.\ \protect{\cite[Conjecture 7.1]{ABGK2020}}]\label{conj:dec_trans}
    Suppose random walk $(X_t)_{t\geq0}$ in changing environment corresponding to deterministic conductances $(c_t)_{t\geq0}$ has $(c_t)_{t\geq0}$ non-increasing with limit $c_\infty$, i.e., $c_t\downarrow c_\infty$.
    If random walk corresponding to $c_\infty$ is irreducible and transient, then $(X_t)_{t\geq0}$ is transient, in the sense that almost surely the walk visits every vertex finitely often.
\end{conjecture}
\begin{conjecture}[cf.\ \protect{\cite[Conjecture 7.1]{ABGK2020}}]\label{conj:dec_rec}
    Suppose random walk $(X_t)_{t\geq0}$ in changing environment corresponding to deterministic conductances $(c_t)_{t\geq0}$ has $(c_t)_{t\geq0}$ non-increasing with limit $c_\infty$, i.e., $c_t\downarrow c_\infty$.
    If $c_\infty \geq \alpha c_0$ for some $\alpha>0$ and random walk corresponding to $c_0$ is irreducible and recurrent, then $(X_t)_{t\geq0}$ is recurrent, in the sense that almost surely the walk visits every vertex infinitely often.
\end{conjecture}
The constant-factor assumption, i.e., the existence of $\alpha>0$, is needed; \cite[Example 4.5]{ABGK2020} observed that the simple example of random walk on $\N$ with
\[ c_t(j,j+1) = \begin{cases} 2^{-j} & \text{if } j < t \\ 1 & \text{otherwise} \end{cases} \]
starts with $c_0\equiv1$, where simple random walk on $\N$ is recurrent, but has positive probability of always going to the right.
With this example, we remark that a 0-1 law does not hold for the more general setting of random walk in changing environment, unlike the classical theory for time-homogeneous Markov chains: this example is of \emph{mixed type}, as with positive probability it always goes to the right, i.e., $X_t=t$ for all $t\geq0$, and so is transient, but if it ever goes to the left, which also occurs with positive probability, then $(X_t)_{t\geq0}$ henceforth behaves as random walk on the fixed network $c(j,j+1)=2^{-j}$, which moves left at each positive state with probability $\frac23$, and so is recurrent.

\cref{conj:inc_rec} is a generalization of the following conjecture of Dembo, Huang, and Sidoravicius~\cite{DHS2014}.
\begin{conjecture}[\protect{\cite[Conjecture 1.10]{DHS2014}}]\label{conj:dhs}
If simple random walk on a fixed graph $G_\infty$ of uniformly bounded degrees is recurrent, then the same applies to simple random walk on $(G_t)_{t\geq0}$ starting at the same point, for any choice of $G_t\uparrow G_\infty$, in the sense that almost surely the walk returns to its starting point infinitely often.
\end{conjecture}

\Cref{conj:dhs} arose from the study in~\cite{DHS2014} of random walks on growing domains $D_t\uparrow\R^d$, where the recurrence-versus-transience dichotomy was established for $d\geq3$ under regularity assumptions on the growth (e.g., requiring the domain $D_t$ be bounded on both sides by Euclidean balls with radii tending to infinity); see also Dembo, Huang, Morris, and Peres~\cite{DHMP2017} for transience criteria for growing subgraphs via evolving sets. Amir, Benjamini, Gurel-Gurevich, and Kozma~\cite{ABGK2020} proved \cref{conj:inc_rec,conj:inc_trans,conj:dec_trans,conj:dec_rec} on $\N$, where it holds even for adaptive environments, and moreover also proved \cref{conj:inc_rec,conj:dec_trans} more generally for trees, again even for adaptive environments. Park and Ray~\cite{ParkRay} prove recurrence and transience for ``slowly changing environments'', i.e., graphs (which are allowed to be adaptive and non-monotone) with bounds on how much they are allowed to change over time. The general conjectures, with perhaps the quintessential example being random walk on growing subgraphs of $\Z^d$, were all open.

Beyond recurrence and transience, understanding various properties of time-inhomogeneous Markov chains (both monotone and otherwise) is a topic of widespread interest. Major areas of interest include cover times and mixing times~\cite{AGHHHN2022, AKM2018, SauerwaldZanetti, ShimizuShiraga}, Gaussian heat kernel estimates~\cite{DHZ2019, HuangKumagai}, and merging~\cite{Moumeni,SCZ2011a, SCZ2011b}. See also Huang~\cite{Huang} for transition probability bounds, and Dolgopyat, Keller, and Liverani~\cite{DKL2008}, and Dolgopyat and Liverani~\cite{DolgopyatLiverani} for central limit theorems.

Our main results prove the two monotone non-decreasing questions, \cref{conj:inc_rec,conj:inc_trans}, and disprove the two monotone non-increasing questions, \cref{conj:dec_trans,conj:dec_rec}.
To standardize our notation, a \emph{network} $(V,c)$ is a countable vertex set $V$ with symmetric conductances $c: V\times V\to[0,\infty)$; its edges are the pairs with $c(e)>0$.
The \emph{total conductance} of a vertex is $\pi(y)=\sum_z c(y,z)$, which we henceforth assume is always finite at every vertex so that the random walk is well-defined.
If the random walk is at $y$, it jumps to $z$ with probability $c(y,z)/\pi(y)$.
We emphasize that no local finiteness is assumed: a vertex may be incident to infinitely many edges, as long as its total conductance is finite, which is necessary for the walk to be defined.
\begin{theorem}\label{thm:inc_rec}
    \cref{conj:inc_rec} and thus \cref{conj:dhs} are true.
\end{theorem}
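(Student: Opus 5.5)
The plan is to reduce \cref{conj:inc_rec} to a statement about one fixed vertex $o$ and one finite annulus at a time. I would show that for every finite set $B\ni o$ and every time $s$, almost surely the walk started from $X_s$ at time $s$ hits $o$ before leaving $B$, up to an error that vanishes as $B\uparrow V$. Irreducibility and recurrence of $c_\infty$ give
\[
\Reff^{c_\infty}(o\oto B^c)\to\infty \quad\text{as } B\uparrow V.
\]
This is the only input from the limit network. Conditioning on the time-$s$ position and using the Markov property of the time-inhomogeneous chain, ``return to $o$ with probability one from any position at any time'' iterates to infinitely many visits to $o$. Irreducibility of $c_\infty$, together with $c_t\uparrow c_\infty$ (each edge of $c_\infty$ eventually has conductance bounded below), then upgrades this to infinitely many visits to every vertex by a Borel--Cantelli/Lévy conditional argument.

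For the annulus estimate I would fix the unit voltage $v$ of $c_\infty$, with $v(o)=0$ and $v=1$ on $B^c$, harmonic for $c_\infty$ in $B\setminus\{o\}$. The goal is to show that $v(X_t)$, stopped at the exit time of $B\setminus\{o\}$, is close to a supermartingale in an averaged sense, even though it is not one step by step. The drift at time $t$ from $x$ is
\[
\frac{1}{\pi_t(x)}\sum_z c_t(x,z)\bigl(v(z)-v(x)\bigr) = -\frac{1}{\pi_t(x)}\sum_z \bigl(c_\infty-c_t\bigr)(x,z)\bigl(v(z)-v(x)\bigr),
\]
using harmonicity of $v$ for $c_\infty$. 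I would bound the total expected positive drift by a sum over edges of $(c_\infty-c_t)(e)\,|\nabla v(e)|$ weighted by occupation times. The key point is monotonicity: $c_\infty-c_t$ is non-increasing in $t$, so the contribution of each edge telescopes instead of accumulating. Summing by parts in $t$, I expect the total error to be controlled by a Cauchy--Schwarz bound against the Dirichlet energy
\[
\sum_e c_\infty(e)\,|\nabla v(e)|^2 = \Reff^{c_\infty}(o\oto B^c)^{-1}\to 0,
\]
via a comparison of the occupation measure of the time-varying walk with the $c_\infty$ Green kernel. Optional stopping at the exit time would then give
\[
\P(\text{exit } B \text{ before hitting } o)\le v(X_s)+o(1),
\]
and $v(X_s)\to0$ as $B\uparrow V$ for fixed $X_s$. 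This is because $v$ is the $c_\infty$ escape probability, which tends to $0$ pointwise by recurrence.

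The main obstacle is the occupation-time comparison in the middle step. The time-varying walk can linger where $c_t\ll c_\infty$, so its Green function is not obviously dominated by that of $c_\infty$. There is an additional problem: the paper's own counterexamples show that the analogous monotone non-increasing statements fail, so the argument must use the direction of monotonicity essentially, not just a bound $c_t\le c_\infty$. My first attempt would be to weight time by $\pi_t(X_t)$, that is, to pass to a continuous-time version in which edge $e$ rings at rate $c_t(e)$, matching the continuous-time results the paper announces. In that time scale, edges of $c_\infty$ not yet present in $c_t$ act as ``delayed'' transitions. I would then aim for an electrical argument in which the expected net current across each edge is at most that of the $c_\infty$ walk, using that $c_t\uparrow c_\infty$ lets each edge be switched on exactly once and never off.
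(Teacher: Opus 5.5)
There is a genuine gap, and it is the step you flag yourself: the estimate that makes the drift error vanish is never supplied, and the routes you sketch would not supply it. You split $c_t=c_\infty-(c_\infty-c_t)$ and bound the drift edge by edge by $(c_\infty-c_t)(e)\,|v(e^-)-v(e^+)|$ times occupation. That discards all cancellation. For example, on $\Z^2$ take $c_t=\frac12 c_\infty$ for $t<T^*$ and $c_t=c_\infty$ afterwards. Then $v$ is harmonic for every $c_t$, so the true drift is identically zero. Yet as $T^*\to\infty$ your edgewise sum becomes a constant times $\sum_{x\in B}(1-v(x))\sum_{z\sim x}|v(z)-v(x)|$, because the expected number of visits to $x$ before killing is $1-v(x)$. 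This sum diverges as $B\uparrow\Z^2$.

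Your fallbacks do not repair this. Edgewise domination of net currents by those of the $c_\infty$ walk is false even for static finite networks: raising the conductance of one of two parallel routes lowers the current through the other. A continuous-time walk with rates $c_t(e)$ at real time $t$ is also not a time change of the discrete walk in the theorem. Its $n$-th jump uses $c_{T_n}$ at a random time $T_n$, not $c_n$.

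What is missing is to keep the drift signed and control it as a single inner product. Set $\nu_t(y)=\P(X_t=y,\,t<\tau)$ and $\rho_t=\nu_t/\pi_t$; note you divide by $\pi_t$ rather than weight by it. Then $\nabla_t\rho_t$ is exactly the expected net flow of the killed walk across each edge at step $t$, so $\Delta_t\rho_t=\mu_t-\mu_{t+1}$. Your total drift is therefore $\E[v(X_{T\wedge\tau})]-v(x)=-\langle J_T,\nabla v\rangle_r$ with $J_T=\sum_{t<T}\nabla_t\rho_t$. This identity holds for any test function; harmonicity is used only to get $\|\nabla v\|_r^2=\Reff^{c_\infty}(x\oto B^c)^{-1}$.

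Monotonicity enters only through the cross terms. Write each increment $c_{t+1}-c_t$ as a new parallel edge. Then $G_s\subseteq G_t$ gives $\langle\nabla_s\rho_s,\nabla_t\rho_t\rangle_r=\langle\Delta_s\rho_s,\rho_t\rangle=\langle\mu_s-\mu_{s+1},\rho_t\rangle$ for $s\le t$. These telescope, giving $\|J_T\|_r^2\le 2\sum_{t<T}\rho_t(x)$; this is precisely the identity that fails for non-increasing environments.

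Start at $x$ and kill at the first return to $x$ or exit from $B$; by the strong Markov property at return times, this is all the reduction needs. Then the right side is $2/\pi_0(x)$. Cauchy--Schwarz gives $\P(\text{exit } B \text{ before returning to } x)\le\sqrt{2/(\pi_0(x)\Reff^{c_\infty}(x\oto B^c))}\to 0$. No occupation-time or Green-kernel comparison with $c_\infty$ is needed, so lingering where $c_t\ll c_\infty$ is harmless; $c_\infty$ enters only through the test function.
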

\begin{remark}\label{rmk:assumptions}
    Our restriction to non-adaptive conductances cannot be lifted; see \cite[Theorem 6.1]{ABGK2020} for an example of a monotone non-decreasing adaptive random walk on $\Z^2$, with conductances in $[1,2]$ (so bounded between two recurrent networks), whose walk is transient.
    And our assumption that $(c_t)_{t\geq0}$ is monotone also cannot be removed: consider the simple example on $\N$ where $c_t(i,i+1)=1-\1{t=i+1}$, so that $c_t\to c_\infty$ where $c_\infty$ is identically 1, which corresponds to simple random walk on $\N$, which is recurrent.
    However, $X_t=t$ almost surely, and thus is transient.
    Thus, in some sense \cref{thm:inc_rec} is the most general result possible of this form.
\end{remark}
\begin{theorem}\label{thm:inc_trans}
    \cref{conj:inc_trans} is true. The expected number of visits to any point is also finite and bounded by
    \[  
    \sum_{t = 0}^{\infty} \mathbb{P}(X_t = x) \le 2\pi_{\infty}(x) \Reff^{G_0}(x \oto \infty).
    \]
    Furthermore, the escape probability is strictly positive, i.e.,
    \[
    \mathbb{P}(X_t \ne x \text{ for all }t > 0 | X_0 = x) > 0.
    \]
\end{theorem}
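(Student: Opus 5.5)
The plan is to compare the time-varying walk with a single potential $\varphi$ built from the \emph{initial} network $c_0$. I would first reduce to a finite setting: exhaust $V$ by finite sets $B_n\ni x$ and kill the walk on leaving $B_n$ (equivalently, wire $V\setminus B_n$ into a sink). For $c_0$, let $\varphi_n$ be the equilibrium potential of a unit current from $x$ to the sink. It is $c_0$-harmonic on $B_n\setminus\{x\}$, vanishes at the sink, and satisfies
\[
\varphi_n(x)=\Reff^{B_n,c_0}(x\oto\text{sink})\le \Reff^{G_0}(x\oto\infty),\qquad 0\le\varphi_n\le\varphi_n(x),
\]
with net current $\sum_z c_0(x,z)(\varphi_n(x)-\varphi_n(z))=1$. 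The inequality $\Reff^{B_n,c_0}\le\Reff^{G_0}$ holds because wiring the complement of $B_n$ decreases resistance. The bound for the killed walk, uniform in $n$, passes to the limit by monotone convergence. This works because the non-adaptive walk is a time-inhomogeneous Markov chain with deterministic kernels $P_t$, and the killed walks couple with the true walk up to the exit time from $B_n$.

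Next I would run a Lyapunov/drift argument on $\Phi_t=\E[\varphi_n(X_t)]$, with increments $\E[(P_t\varphi_n-\varphi_n)(X_t)]$. At the vertex $x$ the drift is strongly negative: since $\varphi_n$ is maximal at $x$ and $c_t\ge c_0$,
\[
(P_t\varphi_n-\varphi_n)(x)=-\frac{1}{\pi_t(x)}\sum_z c_t(x,z)\bigl(\varphi_n(x)-\varphi_n(z)\bigr)\le-\frac{1}{\pi_t(x)}\le-\frac{1}{\pi_\infty(x)}.
\]
Telescoping would give $\frac{1}{\pi_\infty(x)}\sum_t\P(X_t=x)\le\varphi_n(X_0)+(\text{positive drift accumulated off }x)$. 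Off $x$ the drift equals $\pi_t(y)^{-1}\sum_z (c_t-c_0)(y,z)(\varphi_n(z)-\varphi_n(y))$, which has no sign. This error term is the main obstacle.

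My plan for the error term is to exploit monotonicity in $t$ via Abel summation: write $c_t-c_0=\sum_{s<t}(c_{s+1}-c_s)$ with each increment nonnegative. The goal is to charge the positive drift from each increment $c_{s+1}-c_s$ against the decrease of a second, time-dependent functional. The first candidate is the $c_t$-harmonic potential, i.e.\ the Dirichlet-principle minimizer $\varphi^{(t)}_n$ for $c_t$ with unit current. By Rayleigh monotonicity its value at $x$ is non-increasing in $t$, so the accumulated correction should total at most another $\varphi_n(x)$. That is where I expect the factor $2$ in $2\pi_\infty(x)\Reff^{G_0}(x\oto\infty)$ to come from. Concretely, I would try the functional $\E[\varphi^{(t)}_n(X_t)]+\E[\varphi_n(X_t)]$ and verify, via the Dirichlet principle/Thomson's principle, that the cross terms created when switching from $\varphi^{(t)}_n$ to $\varphi^{(t+1)}_n$ are dominated by the energy drop $\mathcal E_{c_t}(\varphi^{(t)}_n)-\mathcal E_{c_{t+1}}(\varphi^{(t+1)}_n)\ge0$. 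If this pointwise bookkeeping fails, my fallback is an $L^2$ version: bound $\sum_t\P(X_t=x)$ by duality against the densities $\mu_t/\pi_t$ and use Cauchy–Schwarz with the Dirichlet forms $\mathcal E_{c_t}\ge\mathcal E_{c_0}$.

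Finiteness of the expected number of visits gives transience: it implies almost surely finitely many visits to $x$, and irreducibility of $c_0$ gives the same for every vertex. For the escape probability, suppose $\P(X_t\ne x\ \forall t>0\mid X_0=x)=0$. Then the walk returns to $x$ almost surely, at some random time $s_1$, which takes countably many values. Conditioned on $s_1=s$, the future is a walk in the environment $(c_{s+t})_{t\ge0}$, which is non-decreasing and bounded below by $c_0$. By the same bound its expected visit count is finite, so its escape probability is positive. Since the original walk escapes with probability zero, it must return almost surely from each such start, and iterating forces infinitely many visits. This contradicts the finite expectation $\sum_t\P(X_t=x)<\infty$ just proved, so the escape probability is strictly positive.
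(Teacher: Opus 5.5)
The reduction to finite $B_n$ and the step from finite expected visits to transience are fine. There are two genuine gaps, though: the main estimate is never actually proved, and the escape-probability argument does not work.

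\textbf{The main estimate.} You correctly single out the sign-indefinite drift off $x$ as the obstacle, but the proposed repair cannot work by pointwise bookkeeping. The switching term is $\E[(\varphi^{(t+1)}_n-\varphi^{(t)}_n)(X_{t+1})]$. Rayleigh gives $\varphi^{(t+1)}_n(x)\le\varphi^{(t)}_n(x)$, but the potential away from $x$ can increase: adding a strong edge from $x$ to a distant $y$ pushes $\varphi(y)$ up to nearly $\varphi(x)$. The energy drop $\varphi^{(t)}_n(x)-\varphi^{(t+1)}_n(x)$ says nothing about this difference integrated against the law of $X_{t+1}$.

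Your fallback points toward the paper's route but leaves out what makes it work. Testing each $\rho_t=\mu_t/\pi_t$ separately against the $c_0$-potential and using $\mathcal E_{c_t}\ge\mathcal E_{c_0}$ gives a sum of square roots that does not close. The paper instead sums the flows, $J_T=\sum_{t<T}\nabla_t\rho_t$, and uses two exact identities:
\begin{itemize}
\item $\Delta_t\rho_t=\mu_t-\mu_{t+1}$, i.e., the net probability flow of the walk at step $t$ is $\nabla_t\rho_t$;
\item viewing conductance increments as new parallel edges, $\langle\nabla_s f,\nabla_t g\rangle_r=\langle\Delta_s f,g\rangle$ for $s\le t$. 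This is where monotonicity enters.
\end{itemize}
The cross terms then telescope, $\sum_{s<t}\langle\nabla_s\rho_s,\nabla_t\rho_t\rangle_r=\langle\mu_0-\mu_t,\rho_t\rangle$, which gives $\|J_T\|_r^2\le 2\sum_{t<T}\rho_t(x)$. Meanwhile $\langle J_T,\nabla_0\psi_0\rangle_r=\sum_{t<T}\rho_t(x)$ for the unit-current potential $\psi_0$ on $G_0$. Cauchy--Schwarz then yields $\sum_{t<T}\rho_t(x)\le 2\Reff^{G_0}(x\oto\infty)$. The factor $2$ comes from these off-diagonal terms, not from a second copy of $\varphi_n(x)$.

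\textbf{The escape probability.} Zero escape probability from $x$ at time $0$ only says the first return time $s_1$ is a.s.\ finite. It gives no information about the walk restarted from $x$ at time $s_1$, so ``it must return almost surely from each such start'' does not follow.

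Nor does a finite expected visit count for the shifted walk imply that its escape probability is positive. For a time-inhomogeneous chain, a finite expected number of visits is compatible with a sure first return. Positivity of escape is exactly what must be proved, so invoking it for the shifted environment is circular; it is also contradicted by your very next sentence.

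What finiteness actually gives is some reachable start time with positive escape probability. Getting positivity at time $0$ is the real content. The paper does this in three steps:
\begin{itemize}
\item It runs the walk on the static $G_0$ for the first $N$ steps, so that no return before $N$ has probability $q_N(x)\to 1/(\pi_0(x)\Reff^{G_0}(x\oto\infty))>0$.
\item It proves a sharpened $J_T$ bound carrying a factor $q_N(x)$, showing that the expected number of returns after time $N$ tends to $0$ as $N\to\infty$.
\item It transfers positivity to the true walk, since every finite path of the modified walk has positive probability under the increasing conductances.
\end{itemize}
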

This result also resolves an open problem of Dembo, Huang, Morris, and Peres~\cite[Problem 1.17(a)]{DHMP2017}, which asks for this result in the special case of graphs, where all $G_t$ are subgraphs of $\Z^d$ for $d\geq3$, and $G_0$ is the unique infinite cluster of super-critical Bernoulli bond percolation; the result immediately follows as a special case of \cref{thm:inc_trans}, as Grimmett, Kesten, and Zhang~\cite{GKZ1993} showed that the infinite cluster is almost surely transient.
\begin{theorem}\label{thm:dec_trans}
    \cref{conj:dec_trans} is false: there exist a connected network $(V,c_\infty)$ with $\pi_\infty(v)<\infty$ for all $v$, a vertex $v_0\in V$, and a deterministic non-increasing environment $c_t\downarrow c_\infty$ with $\pi_t(v)<\infty$, such that every network $c_t$ for $t\in\N\cup\set{\infty}$ is transient, and yet the walk $(X_t)_{t\ge0}$ with $X_0=v_0$ almost surely visits every vertex of $V$ infinitely often.
\end{theorem}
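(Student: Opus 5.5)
The plan is to build an explicit counterexample by gluing two pieces. The first is a recurrent ``backbone'' carrying the walk, for example $\N$ or $\Z$ with unit conductances and root $v_0$. The second is a family of transient ``escape'' pieces $T_k$, each attached to the backbone at a far-away vertex $k$. The conductances on the attaching edges, and inside the pieces, decrease in time.

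The guiding principle is a light-cone argument. The walk moves at most one step per unit time, so at time $t$ it cannot see anything at graph distance more than $t$ from $v_0$. Conductances beyond that distance are therefore irrelevant for the dynamics before time $t$, but they do count for the transience of the static network $c_t$. Concretely, I would take $T_k$ to be a copy of a fixed transient network, such as a weighted ray with conductances $2^j$. On the attaching edge and on the edges of $T_k$, set $c_t$ equal to the ``transient'' values for $t<\tau_k$, with $\tau_k\le k$, and equal to much smaller values $\delta_k$ for $t\ge\tau_k$. This makes $c_t$ non-increasing. Every $c_t$ with $t<\infty$ is transient, because it contains an untouched transient piece $T_k$ with $\tau_k>t$ glued to a connected network. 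The limit $c_\infty$ should also be transient, which I would arrange by keeping each $\delta_k>0$ and making the limiting pieces transient with a small total attaching conductance. By Rayleigh monotonicity, transience of $c_\infty$ then gives transience of every $c_t$ directly.

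The dynamic walk only ever meets the pieces in their reduced state. So the main step is a Borel--Cantelli estimate in the reduced regime: with probability one, the walk enters only finitely many $T_k$, and after each entry it returns to the backbone. Once this holds, the walk eventually lives on a recurrent network (the backbone together with finitely many reduced pieces it returns from) and hence visits every vertex infinitely often. A further argument extends this to the vertices inside the pieces; alternatively, I would choose the vertex set so that the pieces are visited through excursions from the backbone.

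The main obstacle is exactly this step, because a naive choice fails. The recurrent backbone visits vertex $k$ infinitely often. With any fixed $\delta_k>0$ it will therefore eventually cross into $T_k$, and once inside a transient piece with a thin attaching edge it escapes for good. To get around this, I would make each piece itself ``recurrent from the inside at late times while transient in the limit''. One way to try this is to let the internal conductances of $T_k$ decrease in a second, nested light-cone fashion: each level of $T_k$ is large only before the walk could possibly reach it, so that the walk inside $T_k$ always sees a drift back toward the backbone. Meanwhile transience of the limit is maintained by infinitely many thin transient attachments at ever larger scales. The quantitative heart of the proof is to choose the time thresholds $\tau_k$, and the nested thresholds inside the pieces, growing fast enough. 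The aim is that the sum over $k$ of the probability that the walk ever escapes to infinity through $T_k$ is less than one, and then tends to zero after conditioning on the walk having reached scale $k$; that gives almost sure recurrence rather than the mixed behaviour of the example from \cite{ABGK2020}. I would first try geometric thresholds $\tau_k=2^k$ with $\delta_k=2^{-k^2}$, and estimate the escape probabilities by comparing with effective resistances in the reduced network.
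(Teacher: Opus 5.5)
There is a genuine gap, and it lies in your guiding light-cone principle. Suppose every edge has reached its limiting value before the walk could stand at one of its endpoints. Your condition $\tau_k\le k$ enforces exactly this, and so does your nested rule that each level of $T_k$ ``is large only before the walk could possibly reach it.'' Then for every $t$ and every $v$ with $\P(X_t=v)>0$ the kernel $c_t(v,\cdot)/\pi_t(v)$ equals $c_\infty(v,\cdot)/\pi_\infty(v)$, so $(X_t)$ has exactly the law of the static walk on $c_\infty$. The theorem requires $c_\infty$ to be transient, so this walk almost surely visits every vertex finitely often. Your main Borel--Cantelli step (finitely many entries into pieces, each followed by a return) is therefore false for \emph{every} choice of $\tau_k,\delta_k$, not just the naive one. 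On the other side, the light cone gains nothing: $c_t\ge c_\infty$, so Rayleigh already gives transience of every $c_t$ from that of $c_\infty$, as you note yourself. Your trial values $\tau_k=2^k$ break the light-cone condition, but in the harmful direction. A simple-random-walk backbone reaches $k$ around time $k^2\ll 2^k$, so the walk meets $T_k$ unreduced, as a ray with outward drift, which pulls it away rather than back. If you meant some edges to drop again after the walk can reach them, then the walk does see conductances above $c_\infty$. But the proposal gives no mechanism by which that excess returns the walk, and that mechanism is the whole content of the problem.

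What is missing is this: the walk must genuinely use conductances strictly above their limits, and the removal of these temporary structures must be synchronized with the walk's position. The walk should be stranded where the \emph{limiting} dynamics drive it back. The paper does this on $\Z$ with $c(n,n+1)=2^n$, whose limit is the walk with drift to the right and is transient. It adds rungs $\{n+1,-n\}$ of conductance $4^n$, deleted at $T_n=n^2$. The rungs dominate the conductance at their endpoints, so after reaching $\{-n,n+1\}$ the walk shuttles across the rung until $T_n$. The graph is bipartite, so $X_t\equiv t \pmod 2$, and $n^2\equiv n\equiv -n \pmod 2$; hence the walk sits at $-n$ at time $T_n$. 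From there, the rightward drift carries it through $0$ to $n+2$ before $-(n+1)$ with probability greater than $\frac12$. Two Borel--Cantelli arguments complete the proof. First, leaking off $\{-n,n+1\}$ before $T_n$ has probability $O(n^2 2^{-n})$. Second, the walk must meet infinitely many rungs, since otherwise it would eventually be a biased walk with speed $\frac13$ confined to $|X_t|<\sqrt t+1$. Together these give infinitely many visits to $0$.
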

\begin{theorem}\label{thm:dec_rec}
    \cref{conj:dec_rec} is false: for every $\delta>0$, there exists a connected network $(V,c_0)$ with $\pi_0(v)<\infty$ for all $v$, a vertex $v_0\in V$, $\alpha\in(0,1)$, and a deterministic non-increasing environment $c_t\downarrow c_\infty\ge\alpha c_0$, in which every edge changes at most once, such that every network $c_t$ for $t\in\N\cup\set{\infty}$ is recurrent, and yet the walk $(X_t)_{t\ge0}$ with $X_0=v_0$ visits every vertex of $V$ finitely often with probability at least $1-\delta$.
\end{theorem}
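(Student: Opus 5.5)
Our plan is to build a counterexample by combining many copies of the mechanism in the $\N$ example of \cite[Example 4.5]{ABGK2020}: a recurrent path whose conductances, just ahead of the walker, drop to a geometrically decaying profile at the right moment. The constraint $c_\infty\ge\alpha c_0$ rules out decay all the way to zero along one path, so we instead use a \emph{funnel}. Start from a recurrent network $c_0$ made of a root $v_0$ and a sequence of long gadgets. Inside each gadget, the conductances are cut by a fixed factor $\alpha$ (say $\alpha=1/2$) exactly once, and at a chosen time, so that the walker is biased forward only while it is riding a prescribed front. After all cuts are made, the network is still recurrent: it is $c_0$ with every edge scaled by a factor in $[\alpha,1]$. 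By Rayleigh monotonicity and the bound $c_\infty\ge\alpha c_0$, resistances change by at most a factor $1/\alpha$, so $c_\infty$ and every intermediate $c_t$ are recurrent. This settles the requirement that every $c_t$ be recurrent automatically.

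For the bias itself, we use a path $0,1,2,\dots$ in which each vertex $j$ also carries a ``side'' structure. Cutting every edge except the forward one at the moment the walker arrives gives forward probability at least a fixed $p>1/2$. A single factor-$\alpha$ cut cannot produce bias exceeding a constant, so we amplify with multiplicity. Replace each edge $(j,j+1)$ of $\N$ by a bundle of $m_j$ parallel edges (equivalently a conductance $m_j$), with $m_j$ chosen so that the backward edge $(j-1,j)$ has conductance $m_{j-1}$, while the forward bundle is comparable and recurrence is preserved. The cleanest concrete choice is to take $c_0$ to be a recurrent birth–death chain with $c_0(j,j+1)=1$. Along the path, the ratio of forward to backward conductance needed at each step is only a constant. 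At time $t=j$, we cut the edge $(j-1,j)$ behind the walker by the factor $\alpha$, leaving all others untouched. Then, if $X_s=s$ for all $s\le t$, the walker at $t$ moves forward with probability $1/(1+\alpha)$. Each edge changes once, and monotonicity holds.

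The key estimate, and the main obstacle, is that a constant bias gives $\P(X_t=t\ \forall t)=\prod (1+\alpha)^{-1}=0$. We therefore need the walker to survive despite occasional backward steps. We would handle this by running the cut schedule as a deterministic ``wave'' that moves slightly slower than the walker's drift, say cutting edge $(j-1,j)$ at time $\lfloor j/v\rfloor$ with $v<(1-\alpha)/(1+\alpha)$. The walker then sees a region behind it of cut edges and ahead of it of uncut ones. While it stays ahead of the wave, it moves in an environment with drift to the right locally near itself, and drift is zero ahead. This fails, since ahead is unbiased, so instead we make the cuts ahead. Cut $(j,j+1)$ \emph{behind} means ahead-of-wave conductances should increase. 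Since we cannot increase, we pre-scale $c_0$ as an alternating geometric profile: take $c_0(j,j+1)=1$ for all $j$, and cut all edges with $j<vt$ at time $t$. Then the walker positioned in $(vt, \infty)$ sees unbiased walk, while in the cut region the cut has no bias (uniform scaling). So bias arises only at the wave front. The walker must sit at the front, which is a random-walk-with-reflecting-drift problem.

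To beat this, we would switch to dimension: use a transient-looking but recurrent structure, namely a tree-like or $\Z^2$-like funnel in which $c_0$ is critically recurrent (for example resistance $\Reff(0\oto n)\sim\log\log n$). A factor-$\alpha$ tilt along the wave then yields transience for the walker that escapes ahead of the wave with probability $\ge 1-\delta$. We would choose the start so that the wave is initially far ahead (scale parameters to $\delta$). Verifying this survival estimate via a supermartingale (a harmonic function of the post-cut network evaluated along the wave) is the step we expect to be hardest.
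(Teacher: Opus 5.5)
Your recurrence argument is fine: every $c_t\le c_0$, so Rayleigh monotonicity applies. The transience half, however, has a genuine gap, because you never arrive at a construction together with a proof. You analyze two concrete schemes and your own analysis abandons both: a birth--death chain on $\N$ with the edge behind the walker cut, and a wave of cuts moving at fixed speed. The final plan is a critically recurrent ``funnel'' in which a factor-$\alpha$ tilt along a wave yields transience. It specifies neither the network nor the schedule, and it defers exactly the survival estimate that is the content of the theorem.

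The obstruction you ran into is structural. A vertex all of whose incident edges have been scaled by the same factor keeps its transition probabilities. So the walk feels the cuts only at vertices incident to both cut and uncut edges. In a nearest-neighbor structure cut front by front, those are just the vertices at the front.

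A tree-like funnel does not escape this in its natural form. Take a spherically symmetric tree with layer-by-layer cuts. Its radial projection is a birth--death chain with level conductances $C_t(r)$ that are non-increasing in $t$ and satisfy $C_\infty\ge\alpha C_0$. Moreover, $\sum_r C_0(r)^{-1}=\infty$ exactly when the tree is recurrent. Since \cite{ABGK2020} proved the decreasing-recurrence conjecture on $\N$, that walk is recurrent however you tune the resistance growth or the wave speed.

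Two ideas are missing: (a) a way to make the cut felt at every vertex behind the front, not only at it; and (b) a way to start the walker at the front in each phase even though the schedule is deterministic.

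The paper achieves (a) with heavy-tailed long-range edges. It takes the complete graph on $\N$ with $c_0(i,j)=|j-i|^{-5/2}$. This is recurrent, since $f_N(n)=(1-n/N)_+$ has energy $O(N^{-1/2})$. It then scales each edge $\{i,j\}$, $i<j$, by $\alpha$ at time $T_j$. During phase $n$, a walker at $n-d$ has every move into $[0,n]$ scaled by $\alpha$. Its jumps over the front, of total conductance $R(d)=\sum_{k>d}k^{-5/2}\asymp(d+1)^{-3/2}$, are not scaled. Set $V(d)=(d+1)^{5/4}$; then $\alpha A(d)\le V(d)R(d)$ for small $\alpha$. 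Now define $\Phi_n(i)=V(n-i)$, capped at $V(D_n)$ on $[0,n-D_n]$ with $D_n=\lfloor n/3\rfloor$, and equal to $0$ on $[n+1,\infty)$. This is a supermartingale. Hence the walk falls back to $[0,n-D_n]$ before making a new record with probability at most $(D_n+1)^{-5/4}$, which is summable.

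For (b), the paper attaches pendant rooms $q_n$ with huge static conductance $M_n$ and chooses the times $T_n$ recursively. These hold the walker at each new record $n$ until $T_n$, with total failure probability below $\delta$. This is what enforces your ``the walker must sit at the front'', which a deterministic wave cannot do on its own.
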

In some sense, at an intuitive level, these counterexamples exist for the monotone non-increasing setting because the non-increasing setting allows structures to be removed, where under certain timings of this removal, the resulting random walk in changing environment can have different behavior than its limiting network, which does not see these temporary structures.
For example, our construction for \cref{thm:dec_trans} consists of a random walk on $\Z$ with positive drift, but with long edges of high conductance between $n+1$ and $-n$, which are slowly and sequentially removed, such that each edge sends the random walk backwards, thus ensuring it visits 0 infinitely often.
Similarly, our construction for \cref{thm:dec_rec} essentially consists of a driftless random walk on $\N$, which is recurrent, but the conductances are decreased slowly from left to right, so that, following this ``wavefront,'' the random walk experiences positive drift and thus is transient; the limiting network has all conductances decreased by the same factor, so is again driftless and thus recurrent.
In the monotone non-decreasing setting of \cref{thm:inc_rec,thm:inc_trans}, edges are permanently added, rather than removed, and so the limiting graph still possesses these structures; similarly, any ``wave'' example would pull the random walk in the opposite direction as the direction of propagation, unlike the non-increasing setting where the wave can push the random walk along the direction of propagation, and thus the wave can only have a temporary effect on the random walk in the non-decreasing setting, where the monotonicity results do hold.

Amir, Benjamini, Gurel-Gurevich, and Kozma \cite[Question 7.3]{ABGK2020} also ask whether analogous results hold for continuous-time random walks in changing environments.
In this setting, with conductances $(c_t)_{t\geq0}$ now for all real $t\geq0$, each edge $e$ is equipped with a Poisson clock with rate $c_t(e)$ at time $t$, for all $t\geq0$.
When a clock rings, if the random walk is adjacent to that edge at that time, then the random walk traverses that edge.
We prove the continuous time analogs of our positive results, \cref{thm:inc_rec,thm:inc_trans}.
Recall that for continuous-time random walks, letting $T_n$ denote the time of the $n$-th jump, it is possible for $\lim_{n\to\infty} T_n < \infty$; then $\zeta=\lim_{n\to\infty} T_n$ is referred to as the \emph{explosion time}.
We refer readers to~\cite{Norris} for standard background on Markov chains, including continuous-time Markov chains.
\begin{theorem}\label{thm:ct_inc_rec}
    Suppose continuous-time random walk $(X_t)_{t\geq0}$ in changing environment corresponding to deterministic conductances $(c_t)_{t\geq0}$, has $(c_t)_{t\geq0}$ non-decreasing with limit $c_\infty$, i.e., $c_t\uparrow c_\infty$.
    If random walk corresponding to $c_\infty$ is irreducible and recurrent, then $(X_t)_{t\geq0}$ does not explode, i.e., it makes finitely many jumps in every bounded time interval, and is recurrent, in the sense that $\set{t:X_t=v}$ is unbounded for all $v$.
\end{theorem}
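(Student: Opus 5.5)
Our plan is to reduce \cref{thm:ct_inc_rec} to the discrete-time \cref{thm:inc_rec} by considering the jump chain. The main difficulty is that in continuous time the jump chain is not a walk in a deterministic environment: the environment in force at the $n$-th jump is $c_{T_n}$, and $T_n$ depends on the walk. We therefore first prove non-explosion. Only then do we transfer recurrence, and for that we will rerun the discrete-time argument with random but monotone stopping-time indexing rather than appeal to the theorem as a black box.

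\emph{Step 1 (non-explosion).} Explosion requires infinitely many jumps in a bounded window $[0,T]$. On $[0,T]$ the jump rates are bounded by $\pi_T(y)\le\pi_\infty(y)$. We couple the walk on $[0,T]$ with a continuous-time walk of the fixed network $c_\infty$ by thinning: attach to each edge $e$ a Poisson process of rate $c_\infty(e)$. Each ring at time $s$ is accepted with probability $c_s(e)/c_\infty(e)$, and the walk moves only on accepted rings at its current position. In this way the walk is driven by Poisson processes of rate at most $c_\infty$. Explosion in $[0,T]$ would then force infinitely many rings of the dominating clocks, all at vertices visited by the walk.

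Recurrence of $c_\infty$ implies that the continuous-time walk with rates $c_\infty$ (holding rate $\pi_\infty$) does not explode. The cleaner route is to use the expected total holding time. With the deterministic clocks, the total rate of $c_\infty$-rings seen while the walk sits at $y$ is at least $\pi_\infty(y)$ per unit time, and those rings are not accepted only with bounded-away probability. We would show the following: if the walk makes infinitely many jumps before $T$, then the visited vertices have $\sum 1/\pi_T(X_{\cdot})<\infty$ along the jump chain. We then rule this out using \cref{thm:inc_rec}-type estimates, via the Green function bound that recurrent $c_\infty$ gives infinite expected time $\sum_n 1/\pi_\infty(Y_n)$.

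This is the step I expect to be the main obstacle. The comparison of holding times with a random, walk-dependent environment is delicate, and I would try first to handle it inside the proof of \cref{thm:inc_rec} by inserting the holding-time weights there.

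\emph{Step 2 (recurrence).} Given non-explosion, $T_n\to\infty$. The jump chain $Y_n=X_{T_n}$ moves at step $n$ according to $c_{T_{n+1}^-}$-weights, because the jump is taken along the edge whose clock rings first. Since $T_n$ is increasing, the effective environments are monotone non-decreasing and bounded by $c_\infty$. They are, however, adapted. The discrete proof of \cref{thm:inc_rec} is presumably an electrical or supermartingale argument: typically a quantity such as the voltage $h_\infty$ to a distant set, combined with monotonicity in $t$, yields a supermartingale. We would verify that it uses the environment only through conductances at the current time being at most $c_\infty$ and monotone along the trajectory.

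Here the adaptedness is harmless, because $T_{n+1}$ is determined by clocks independent of the chosen direction given the past, up to the order of rings. To make this precise, we condition on the clock configuration through time-change arguments. The conclusion is that $Y_n$ visits every vertex infinitely often almost surely. Since $T_n\to\infty$, the set $\{t:X_t=v\}$ is then unbounded.
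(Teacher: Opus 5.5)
There is a genuine gap in Step 2, at exactly the point you call harmless. The jump chain takes its $n$-th step with conductances $c_{T_{n+1}}$. Since $T_{n+1}$ depends on the path (the holding time at $Y_n$ has rate $\pi_s(Y_n)$), $(Y_n)$ is a walk in an \emph{adaptive} monotone environment. The discrete proof does not only use that conductances are monotone along the trajectory and bounded by $c_\infty$. Indeed, by \cref{rmk:assumptions}, recurrence is false for adaptive monotone environments squeezed between recurrent networks. What the proof needs is:
\begin{itemize}
\item at each index $n$, one deterministic network shared by all realizations, so that $\mu_n-\mu_{n+1}=\Delta_n\rho_n$ in \cref{lemma:Laplacian_flow};
\item nestedness $G_s\subseteq G_t$ for $s<t$ across realizations, for the telescoping in \cref{lemma:J_bound} via \cref{lemma:IBP}.
\end{itemize}
Under jump-count indexing, one realization's $T_s$ can exceed another's $T_t$, so the second requirement fails. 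Your conditioning repair does not fix this. Conditioning on the jump times biases the positions, since long holding times favor vertices of small $\pi$. Conditioning on the edge clocks does not produce a deterministic-time environment either, because the walk's attempt times depend on its location. A uniformization would reduce to a deterministic discrete-time environment only when $\sup_y\pi_\infty(y)<\infty$. Step 1 has the same problem. Ruling out $\sum_n 1/\pi_\infty(Y_n)<\infty$ requires an almost-sure, recurrence-type statement about this same adaptive jump chain. The Green function of the static network $c_\infty$ does not give that, and an infinite expectation would not give almost-sure divergence anyway.

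The missing idea is to avoid the jump chain and index by real time, where $c_t$ \emph{is} deterministic. Stop at $\tau=\sigma_B\wedge\tau_x$, where $\sigma_B$ is the exit time of a finite set $B\ni x$, so no explosion occurs before $\tau$. Counting jumps across edges then gives $\Delta_t\nu_t=-\frac{\di}{\di t}\mu_t$. Here $\nu_t$ replaces $\rho_t$, because the jump rate along $e$ is $c_t(e)$. The pairing $\langle D_sf,D_tg\rangle_{\mathcal H}=\sum_e\min(c_s(e),c_t(e))\,\di f(e)\,\di g(e)$ restores the integration-by-parts identity. The discrete argument then runs essentially verbatim and gives $\P(\sigma_B<\tau_x)\le\sqrt{2/(\pi_0(x)\Reff^{c_\infty}(x\oto B^c))}$. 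Hence the walk returns to $x$ before the explosion time $\zeta$ almost surely, and by the strong Markov property it does so infinitely often before $\zeta$. Non-explosion is then a corollary, not a prerequisite: each of these infinitely many holding times at $x$ stochastically dominates an $\mathrm{Exp}(\pi_\infty(x))$ variable, which forces $\zeta=\infty$. So your two steps should be reversed, and the step you flagged as the main obstacle disappears.
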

\begin{theorem}\label{thm:ct_inc_trans}
    Suppose continuous-time random walk $(X_t)_{t\geq0}$ in changing environment corresponding to deterministic conductances $(c_t)_{t\geq0}$ has $(c_t)_{t\geq0}$ non-decreasing with limit $c_\infty$, i.e., $c_t\uparrow c_\infty$.
    If random walk corresponding to $c_0$ is irreducible and transient, then $(X_t)_{t\geq0}$ is transient, in the sense that, for explosion time $\zeta\in[0,\infty]$, the expected number of visits to any vertex is finite and the expected local time $\E\bracket{\int_0^\zeta\1{X_t=v}\di t}$ is finite for all $v$.
\end{theorem}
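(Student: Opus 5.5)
We prove \cref{thm:ct_inc_trans} by discretizing in time and reducing to the estimate of \cref{thm:inc_trans}, whose proof we may take as available. The intended form is a Green-function comparison: the expected local time at $v$ should be at most a constant times $\Reff^{c_0}(v\oto\infty)$, which is finite because $c_0$ is transient.

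\textbf{Step 1 (uniformization on bounded pieces).} Fix $v$ and a large $M$. We represent the continuous-time walk through its jump chain. At its $n$-th jump time $T_n$ the walk sits at $X_{T_n}$ and moves to $z$ with probability proportional to $c_{T_n}(X_{T_n},z)$. Conditionally on the jump times, the jump chain is therefore a discrete-time walk in the changing environment $c'_n := c_{T_n}$. The times $T_n$ are random and depend on the path, so this is an adaptive environment. However, it is still non-decreasing in $n$, lies between $c_0$ and $c_\infty$, and the adaptivity enters only through the holding times.

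To avoid the adaptivity we use thinning. Run, at each vertex $y$, a Poisson clock of rate $\pi_\infty(y)$, or better a locally chosen dominating rate, independent of the walk. At each ring at time $s$ the walk moves to $z$ with probability $c_s(y,z)/\pi_\infty(y)$ and otherwise stays put. The ring times are driven by exogenous randomness, but which vertex's clock is read still depends on the path. We handle this with the standard device of a single Poisson process of rate $\lambda$ in which each ring triggers a lazy step with kernel $c_s(y,z)/\lambda$. This requires $\pi_\infty$ to be bounded, so we first localize. Because the environment is non-decreasing, killing the walk upon exiting a large finite set $B$ changes neither the local time at $v$ before exit nor monotonicity, and the number of visits is then a limit over $B\uparrow V$. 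Inside $B$ set $\lambda=\max_B \pi_\infty$. The embedded lazy chain is a non-adaptive discrete walk (conditioned on the Poisson times) with non-decreasing conductances $\tilde c_n$, where $\tilde c_n(y,z)=c_{s_n}(y,z)$ and the self-loop weight is $\lambda-\pi_{s_n}(y)$.

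\textbf{Step 2 (apply the discrete bound).} The main obstacle is that the self-loops are non-increasing: as $c$ grows, the laziness decreases. So the lazy environment is not monotone, and \cref{thm:inc_trans} does not apply verbatim. We propose to rerun its proof, which presumably uses a Dirichlet or potential argument with a test flow of finite energy in $c_0$. Self-loops contribute no energy, and the bound $2\pi_\infty(x)\Reff^{c_0}(x\oto\infty)$ should depend only on the off-diagonal conductances being non-decreasing. If that fails, we instead use the jump chain directly. Since the jump times are independent of the jump directions given the current vertex, and the environment only increases, one can plausibly couple so that the argument of \cref{thm:inc_trans} still applies, the $T_n$ acting as a monotone time change.

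\textbf{Step 3 (local time).} Each visit to $v$ at a time $s$ has expected holding time at most $1/\pi_s(v)\le 1/\pi_0(v)$. By the strong Markov property at arrival times,
\[ \E\Bigl[\int_0^\zeta \1{X_t=v}\di t\Bigr]\le \frac{1}{\pi_0(v)}\,\E[\#\text{visits to }v]\le \frac{2\pi_\infty(v)}{\pi_0(v)}\Reff^{c_0}(v\oto\infty)<\infty. \]
Irreducibility of $c_0$ gives $\pi_0(v)>0$. Finally, we let $B\uparrow V$ using monotone convergence. Since the bound is uniform in $B$, it also covers the possibility of explosion before $\zeta$.
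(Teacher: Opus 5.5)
Your uniformization idea is a genuinely different route from the paper's, and it can be made to work. As written, though, the step everything rests on is only asserted (``presumably'', ``should''), and the bound you then import is not the one the argument gives.

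\textbf{The normalization is wrong.} In your lazy chain on $B$, the total weight at every vertex, self-loop included, is $\lambda_B=\max_B\pi_\infty$. That is the normalization that must appear in $\rho_n=\nu_n/\pi_n$ for \cref{lemma:Laplacian_flow} to hold, because the step probabilities are $c_{s_n}(y,z)/\lambda_B$. Rerunning the proof of \cref{thm:inc_trans} verbatim therefore bounds the expected number of \emph{lazy steps} spent at $v$ by $2\lambda_B\Reff^{c_0}(v\oto B^c)$. The constant $\pi_\infty(v)$ you write is the off-diagonal weight, which is not the $\pi$ in that proof, and nothing in your proposal converts one into the other. The theorem assumes no uniform bound on total conductance, so $\lambda_B$ may diverge as $B\uparrow V$. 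Your Step 3 estimate is then not uniform in $B$, yet uniformity is exactly what you invoke to pass from $\sigma_B$ to $\zeta$.

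\textbf{The self-loop claim needs an actual check.} It is true: $\di f$ vanishes on loops, so \cref{lemma:IBP,lemma:Laplacian_flow} involve only the off-diagonal conductances $c_{s_n}$, which are non-decreasing in $n$. \cref{lemma:J_bound} uses no monotonicity of $\pi_n$ at all. Because you kill the walk upon exiting $B$, you also need these lemmas for the stopped walk, not \cref{thm:inc_trans} itself.

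\textbf{The fallback does not work.} Your jump-chain alternative is not an argument: that environment is adaptive, and adaptivity is precisely what can destroy such monotonicity statements (cf.\ \cref{rmk:assumptions}).

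\textbf{How to repair it.} Keep track of the normalization. Fix the ring times and start the walk at $v$; other starting points follow by the strong Markov property at the hitting time of $v$, using that the later environment dominates $c_0$. Then $\Lambda=\sum_n\rho_n(v)=\lambda_B^{-1}\sum_n\P(\tilde X_n=v,\ n<\tau)$. Testing $J_T$ against the unit-current voltage from $v$ to $B^c$ in $c_0$ gives $\Lambda\le 2\Reff^{c_0}(v\oto B^c)$.

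Averaging over the rate-$\lambda_B$ clock, $\E[\Lambda]$ is exactly the expected local time at $v$ before $\sigma_B$. The expected number of genuine departures from $v$ is $\sum_n\rho_n(v)\pi_{s_n}(v)\le\pi_\infty(v)\Lambda$. Both bounds are free of $\lambda_B$, so monotone convergence in $B$ finishes, and you recover the paper's bound $2\Reff^{G_0}(x\oto\infty)$ on the local time.

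\textbf{Comparison with the paper.} The paper avoids uniformization entirely. It represents conductance growth by intervals $[0,c_t(e)]$, so that $\langle D_sf,D_tg\rangle_{\mathcal H}=\sum_e\min(c_s(e),c_t(e))\,\di f(e)\,\di g(e)$. It proves $\Delta_t\nu_t=-\frac{\di}{\di t}\mu_t$ with $\nu_t$ in place of $\rho_t$, since the jump rates are $c_t(e)$ rather than $c_t(e)/\pi_t$. It then bounds the local time first and the number of visits second, via $\pi_\infty(x)$. Your route buys direct reuse of the discrete machinery, at the cost of localization, non-monotone self-loops, and this extra bookkeeping.
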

\begin{remark}
    While we believe this definition of a continuous-time random walk in changing environment $(c_t)_{t\geq0}$ is the most natural, and is the one considered by Amir, Benjamini, Gurel-Gurevich, and Kozma~\cite{ABGK2020}, an alternative definition that has been considered, e.g., by Dembo, Huang, Morris, and Peres~\cite{DHMP2017}, is the \emph{constant-speed} random walk, which moves whenever a Poisson clock with rate 1 rings, and upon ringing at time $t$ selects a move according to the conductances $c_t$.
    In this setting, the explosion time $\zeta=\infty$ almost surely, and the analogs of \cref{thm:ct_inc_rec,thm:ct_inc_trans} immediately follow from \cref{thm:inc_rec,thm:inc_trans} by simply conditioning on the times $T_1,T_2,\dots$ that the clock rings and applying the results to $(c_{T_n})_{n\geq0}$.
\end{remark}
\subsection{Section overview}
In \cref{sec:prelim} we review classical background of potential theory and electrical networks, introduce our notation, and prove some basic observations for our monotone non-decreasing setting.
In \cref{sec:inc} we prove the positive results for monotone non-decreasing random walks, \cref{thm:inc_rec,thm:inc_trans}.
In \cref{sec:ct} we prove the continuous time analogs of our positive results, i.e., \cref{thm:ct_inc_rec,thm:ct_inc_trans}.
In \cref{sec:dec_trans,sec:dec_rec} we prove the counterexamples for monotone non-increasing random walks, i.e., \cref{thm:dec_trans,thm:dec_rec}, respectively.
Finally, in \cref{sec:open} we record some open questions.

\section{Preliminaries}\label{sec:prelim}
\subsection{Review of potential theory and electrical networks} \label{subsec:review}
We review classical theory of random walks on electrical networks, and refer readers to~\cite{LyonsPeres2016} for an overview of the theory.
Let $G = (V, E, c)$ be a (potentially infinite) multigraph (i.e., it may have loops and multiple edges between the same pair of vertices) with edge conductances $c : E \to (0, \infty)$.
$E$ is the (directed) edge set of $G$, and $c(e)$ is the conductance of edge $e \in E$ with reciprocal $r(e) = 1/c(e)$.
Each edge $e \in E$ has a tail vertex $e^-$ and a head vertex $e^+$.
We assume $G$ is undirected, so that every edge $e \in E$ has a reverse edge $-e \in E$ with $(-e)^- = e^+$, $(-e)^+ = e^-$, and $c(-e) = c(e)$.
We assume $G$ has finite total conductance, i.e., $\sum_{e^- = v} c(e) < \infty$ for all $v \in V$; this is needed to define the random walk on $G$.
It may still be that $v$ has infinitely many neighbors.

Let $\ell^2(V)$ be the Hilbert space of functions $f : V \to \mathbb{R}$ with inner product 
\[
\langle f, g \rangle = \sum_{v \in V} f(v) g(v).
\]
Let $\ell^2_{-} (E)$ be the space of antisymmetric functions $j : E \to \mathbb{R}$, i.e., satisfying $j(-e) = -j(e)$ for all $e \in E$, with inner product
\[
\langle j, k \rangle = \frac{1}{2} \sum_{e \in E} j(e) k(e).
\]
The factor of $\frac12$ is included to avoid double counting edges, or equivalently, one can take $\langle j, k \rangle = \sum_{e \in E_{1/2}} j(e) k(e)$ where $E_{1/2}$ is a choice of orientation of the edges of $G$.

Define the boundary operator $\di^* : \ell^2_{-}(E) \to \ell^2(V)$ and coboundary operator $\di : \ell^2(V) \to \ell^2_{-}(E)$ by
\[
(\di^* j)(v) = \sum_{e^- = v} j(e), \qquad (\di f)(e) = f(e^-) - f(e^+).
\]
Note that they satisfy the relation $\langle \di^* j, f \rangle = \langle j, \di f \rangle$ for all $j \in \ell^2_{-}(E)$ and $f \in \ell^2(V)$. The gradient operator $\nabla : \ell^2(V) \to \ell^2_{-}(E)$ is defined by
\[
(\nabla f)(e) = c(e) (f(e^-) - f(e^+)) = c(e) (\di f)(e).
\]
The Laplacian operator $\Delta : \ell^2(V) \to \ell^2(V)$ is defined as $\Delta = \di^* \nabla$, or equivalently,
\[
(\Delta f)(v) = \sum_{e^- = v} c(e) (f(v) - f(e^+)) = \sum_{e^- = v} c(e) (\di f)(e).
\]
We define the Dirichlet energy of a function $f \in \ell^2(V)$ by
\[
\mathcal{E}(f) = \langle f, \Delta f \rangle = \langle \nabla f, \nabla f \rangle_r = \langle \di f, \di f \rangle_c,
\]
where $\langle j, k \rangle_h = \langle jh, k \rangle$ is the weighted inner product on $\ell^2_{-}(E)$ with weight $h : E \to (0, \infty)$. More generally, the Dirichlet form $\mathcal{E} : \ell^2(V) \times \ell^2(V) \to \mathbb{R}$ is defined by
\[
\mathcal{E}(f, g) = \langle f, \Delta g \rangle = \langle \nabla f, \nabla g \rangle_r = \langle \di f, \di g \rangle_c
\]
and the energy of a flow $j \in \ell^2_{-}(E)$ is defined by
\[
\| j \|_r^2 = \langle j, j \rangle_r = \frac{1}{2} \sum_{e \in E} r(e) j(e)^2.
\]
As a consequence, we may deduce that $\mathcal{E}(f) = \| \nabla f \|_r^2$.

The \emph{effective resistance} $\Reff^G(A \leftrightarrow B)$ between two disjoint sets $A$, $B$ satisfies Thomson's principle:
\[
\Reff^G(A \leftrightarrow B) = \inf \left\{ \| j \|_r^2 : j \in \ell^2_-(E), \di^* j |_{(A \cup B)^c} = 0 , \sum_{e^- \in A} j(e) = 1 \right\}.
\]
When $(A \cup B)^c$ is finite, equality is obtained by the \emph{unit current flow} $i_{A, B}$ which satisfies $i_{A,B} = \nabla \psi_{A, B}$ for some voltage function $\psi_{A, B}$. This voltage function is harmonic on $(A \cup B)^c$ and may be chosen such that $\psi_{A, B} = \Reff(A \leftrightarrow B)$ on $A$ and $0$ on $B$. Rescaling by $\Reff^G(A \leftrightarrow B)$, this also leads to Dirichlet's principle:
\[  
    \Reff^G(A \leftrightarrow B)^{-1} = \inf \left\{ \| \nabla f \|_r^2 : f|_A = 1 , f|_B = 0 \right\}.
\]
Equality is obtained by the harmonic measure $\phi_{A, B}$ with boundary conditions $\phi_{A, B} = 1$ on $A$ and $\phi_{A, B} = 0$ on $B$.
Rayleigh's monotonicity principle states that if $G_1 \subseteq G_2$, or equivalently if their edge conductances satisfy $c_1 \leq c_2$, then
\[  
\Reff^{G_1}(A \leftrightarrow B) \ge \Reff^{G_2}(A \leftrightarrow B).
\]

When $G$ is infinite, define
\[  
    \Reff^G(x \leftrightarrow \infty) = \lim_{B \uparrow V} \Reff^{G}(x \leftrightarrow B^c)
\]
where $B \uparrow V$ denotes a finite set growing to exhaust all of $V$.
It is well-known that the escape probability is given by
\[  
\mathbb{P}(X_t \ne x \text{ for all } t > 0 | X_0 = x) = \frac{1}{\pi(x) \Reff(x \oto \infty)}
\]
and therefore the random walk on $G$ starting at $x$ is recurrent if and only if $\Reff^G(x \leftrightarrow \infty) = \infty$. Hence for two graphs $G_1 \subseteq G_2$, Rayleigh's monotonicity principle implies that if $G_2$ is recurrent, then $G_1$ is also recurrent. Similarly, if $G_1$ is transient, then $G_2$ is also transient.

\subsection{Basic observations}\label{subsec:basics}
Here we explain the basic setup and lemmas that will be used in the proofs of \cref{thm:inc_rec,thm:inc_trans}.
Instead of increasing the conductance of an edge from $c_t(e)$ to $c_{t+1}(e)$, we shall always add a new parallel edge with conductance $c_{t+1}(e) - c_t (e)$; this is of course equivalent, but this convention will trivialize both the statement and proof of \cref{lemma:IBP}.
In this context, we define network inclusion $G_1 \subseteq G_2$ to mean that $G_1$ can be obtained by erasing edges from $G_2$.
Equivalently, we may view edge removal as changing their conductance to zero.
While our definition of networks only allowed for strictly positive conductances, it is easy to check that all of the definitions in Section~\ref{subsec:review} generalize easily.
The only subtlety is the inner product $\langle \cdot, \cdot \rangle_r$, where we have to be careful to only take such an inner product on edges with strictly positive conductance.

Now let $G_t = (V, E_t, c)$ be a monotone non-decreasing sequence of graphs, i.e., $E_t \subseteq E_{t+1}$ for all $t \ge 0$, where letting $E = \bigcup_{t \ge 0} E_t$, we have $c : E \to (0, \infty)$ a positive weight function. The limiting graph will be denoted by $G = (V, E, c)$, with resistance $r$. For brevity of notation, subscripts on the operators $\Delta$ and $\nabla$, e.g., $\Delta_t$ and $\nabla_t$, as well as the resistance $r$, will denote the operators and resistances corresponding to the graph $G_t$.
We view $\ell_-^2(E_s)\subseteq\ell_-^2(E_t)\subseteq\ell_-^2(E)$ for $s<t$ by extending functions by zeros.
Our most important observation is the following lemma.
\begin{lemma}\label{lemma:IBP}
    For any three networks $G_1 \subseteq G_2 \subseteq G_3$ and $f, g \in \ell^2(V)$,
    \[
    \langle \nabla_1 f, \nabla_2 g \rangle_{r_3} = \langle \Delta_1 f, g \rangle
    \]
\end{lemma}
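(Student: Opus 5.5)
The plan is to expand the left-hand side edge by edge and see that the weights cancel. By the conventions of \cref{subsec:basics}, $G_1\subseteq G_2\subseteq G_3$ means that $E_1\subseteq E_2\subseteq E_3$ as edge sets with a single common conductance function $c$ (increases in conductance are recorded as new parallel edges). Hence every edge $e\in E_3$ has a well-defined conductance $c(e)>0$ and resistance $r_3(e)=1/c(e)$. Also, $\nabla_i f$ is extended by zero off $E_i$, so $(\nabla_1 f)(e)=\1{e\in E_1}\,c(e)(\di f)(e)$, and similarly for $\nabla_2 g$.

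First expand the weighted inner product, which is taken over $E_3$ (the edges of strictly positive conductance in $G_3$):
\[
\langle \nabla_1 f,\nabla_2 g\rangle_{r_3}=\frac12\sum_{e\in E_3} r_3(e)\,\1{e\in E_1}c(e)(\di f)(e)\,\1{e\in E_2}c(e)(\di g)(e).
\]
Since $E_1\subseteq E_2$, the product of indicators is just $\1{e\in E_1}$. Since $r_3(e)c(e)=1$, the summand reduces to $\1{e\in E_1}\,c(e)(\di f)(e)(\di g)(e)$.

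Next, I recognize the resulting sum $\frac12\sum_{e\in E_1}c(e)(\di f)(e)(\di g)(e)$ as $\langle \nabla_1 f,\di g\rangle$ in the unweighted $\ell^2_-$ inner product. By the adjointness relation $\langle \di^* j,h\rangle=\langle j,\di h\rangle$ from \cref{subsec:review}, this equals $\langle \di^*\nabla_1 f,g\rangle=\langle\Delta_1 f,g\rangle$. Alternatively, I can verify this directly: split $(\di g)(e)=g(e^-)-g(e^+)$, reindex the second half by $e\mapsto -e$ using antisymmetry of $\nabla_1 f$, and obtain $\sum_v g(v)\sum_{e^-=v}(\nabla_1 f)(e)$.

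The only real obstacle is justifying the summation manipulations, since the graphs may be infinite and not locally finite. For $f,g\in\ell^2(V)$ and finite total conductance, the quantities $\nabla_1 f$, $\di g$ and $\Delta_1 f$ need not lie in the relevant $\ell^2$ spaces in general, so I would either restrict to the setting where the adjoint relation is valid (e.g.\ finitely supported $f,g$, which is all that is used later for finite networks or exhaustions) or check absolute convergence. For absolute convergence, bound $\sum_e c(e)|f(e^-)g(e^+)|$ by Cauchy--Schwarz by $\sum_v \pi_1(v)(f(v)^2+g(v)^2)$; this is finite when $\pi_1$ is bounded or $f,g$ are finitely supported. Under that condition the rearrangement is legitimate by Fubini.
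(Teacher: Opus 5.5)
Your proof is correct and is essentially the paper's argument. Both note that $\nabla_1 f$ vanishes off $E_1$, where $\nabla_2 g=\nabla_1 g$ and $r_3=r_1=1/c$, so the left side collapses to $\langle \nabla_1 f,\nabla_1 g\rangle_{r_1}=\langle \Delta_1 f,g\rangle$ by adjointness of $\di$ and $\di^*$. Your absolute-convergence check covers a point the paper leaves implicit. One correction: your parenthetical claim that only finitely supported functions are used later is not accurate, since $\phi_B\equiv 1$ on $B^c$ and $\psi_0$ are not finitely supported. In those uses, though, one function is bounded and the other is $\pi$-summable (e.g.\ $\sum_v \pi_t(v)\rho_t(v)\le 1$), which gives the same absolute convergence.
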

\begin{proof}
    Note that $\nabla_1 f$ is supported on $E_1$. Since $\nabla_1 = \nabla_2$ and $r_1 = r_3$ on $E_1$, this implies
    \begin{align*}
        \langle \nabla_1 f, \nabla_2 g \rangle_{r_3} &= \langle \nabla_1 f , \nabla_1 g \rangle_{r_1} = \langle \Delta_1 f, g \rangle. \qedhere
    \end{align*}
\end{proof}
Let $X_t$ be a nearest-neighbor random walk on $G_t$, starting at a vertex $x\in V$.
Consider a stopping time $\tau$ and let $X_{t \wedge \tau}$ be the stopped process. We define two measures on $V$:
\begin{align*}
\mu_t(v) &= \mathbb{P}(X_{t \wedge \tau} = v), \\
\nu_t(v) &= \mathbb{P}(X_t = v, t < \tau).
\end{align*}
We say that $X_{t \wedge \tau}$ is \emph{alive} if $t < \tau$ and \emph{dead} otherwise. With this language, $\mu_t$ is the distribution of $X_{t \wedge \tau}$ and $\nu_t$ is the measure of the living $X_{t \wedge \tau}$. We also define
\[
\rho_t(v)  = \frac{1}{\pi_t(v)} \nu_t(v).
\]
The following observation lets us relate $\rho_t$ with $\mu_t$, and will be critical to our arguments.
\begin{lemma}\label{lemma:Laplacian_flow}
    For any $t \ge 0$,
    \begin{equation}
      \label{eq:laplacian}
    (\Delta_t \rho_t)(v) = \mu_t(v) - \mu_{t+1}(v).
    \end{equation}
    Therefore, for any $\phi \in \ell^2(V)$ and $0 \le s \le t$, we have
    \begin{equation}
      \label{eq:rec-eq}
    \langle \nabla_s \rho_s , \nabla_t \phi \rangle_r = \langle \mu_s - \mu_{s+1} , \phi \rangle
    \end{equation}
    and
    \begin{equation}
      \label{eq:trans-eq}
    \langle \nabla_t \rho_t , \nabla_s \phi \rangle_r = \langle \rho_t , \Delta_s \phi \rangle.
    \end{equation}
\end{lemma}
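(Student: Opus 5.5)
The plan is to prove \eqref{eq:laplacian} first by a one-step computation of the forward equation for the stopped walk, and then to derive \eqref{eq:rec-eq} and \eqref{eq:trans-eq} from it using \cref{lemma:IBP}.

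For \eqref{eq:laplacian}, condition on the state at time $t$. If $X_{t\wedge\tau}$ is dead at time $t$, it stays put, so dead mass contributes equally to $\mu_t$ and $\mu_{t+1}$. If it is alive at $y$ (probability $\nu_t(y)$), it moves to $z$ with probability $c_t(y,z)/\pi_t(y)$; this uses the network $G_t$, including loops. Whether it then becomes dead at time $t+1$ does not affect $\mu_{t+1}$, because $\mu_{t+1}$ records the position $X_{(t+1)\wedge\tau}$, which equals $X_{t+1}$ when $t<\tau$. Hence
\[
\mu_{t+1}(v)=\big(\mu_t(v)-\nu_t(v)\big)+\sum_y \nu_t(y)\frac{c_t(y,v)}{\pi_t(y)} .
\]
Since $\nu_t(v)=\pi_t(v)\rho_t(v)=\sum_{e^-=v}c_t(e)\rho_t(v)$ and $\nu_t(y)c_t(y,v)/\pi_t(y)=c_t(e)\rho_t(e^+)$ summed over edges $e$ with $e^-=v$, this gives $\mu_t(v)-\mu_{t+1}(v)=\sum_{e^-=v}c_t(e)(\rho_t(v)-\rho_t(e^+))=(\Delta_t\rho_t)(v)$. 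With the parallel-edge convention, the multigraph sum over edges in $E_t$ matches the total conductance $c_t(v,\cdot)$, so the identity holds as stated.

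For \eqref{eq:rec-eq}, apply \cref{lemma:IBP} with $G_1=G_s\subseteq G_2=G_t\subseteq G_3=G$, $f=\rho_s$ and $g=\phi$. This gives $\langle\nabla_s\rho_s,\nabla_t\phi\rangle_r=\langle\Delta_s\rho_s,\phi\rangle$, and \eqref{eq:laplacian} at time $s$ turns the right side into $\langle\mu_s-\mu_{s+1},\phi\rangle$.

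For \eqref{eq:trans-eq}, the inner product is symmetric, so apply \cref{lemma:IBP} with $G_1=G_s$ and $f=\phi$, $g=\rho_t$. This gives $\langle\nabla_s\phi,\nabla_t\rho_t\rangle_r=\langle\Delta_s\phi,\rho_t\rangle$.

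The main obstacle is justifying that the sums converge, since \cref{lemma:IBP} and $\langle\di^*j,f\rangle=\langle j,\di f\rangle$ are only formal without it. The walk starts at a single vertex $x$, so for each fixed $t$ the measures $\nu_t,\mu_t$ are supported on the set reachable in $t$ steps. This set may be infinite since local finiteness is not assumed, but $\sum_v\nu_t(v)\le1$. I would first argue that $\rho_t\in\ell^2(V)$ has finite energy: $\mathcal{E}_t(\rho_t)\le 2\sum_e c_t(e)(\rho_t(e^-)^2+\rho_t(e^+)^2)\le 4\sum_v\pi_t(v)\rho_t(v)^2=4\sum_v\nu_t(v)^2/\pi_t(v)$. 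That last sum could be infinite if $\pi_t$ is small where $\nu_t$ is positive, so I would instead take $\phi$ with finite support (or bounded with finite energy) and verify the rearrangements by absolute convergence. For \eqref{eq:rec-eq}, the edge sum $\sum_e c(e)\,|\di\rho_s(e)|\,|\di\phi(e)|$ is controlled through $\sum_e c_s(e)(|\rho_s(e^-)|+|\rho_s(e^+)|)\le 2\sum_v\nu_s(v)\le2$, since $\phi$ is bounded. I expect the authors to treat these as routine, stating the lemma for $\phi$ where the sums converge absolutely.
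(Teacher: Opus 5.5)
Your proof is correct and is essentially the paper's argument: \eqref{eq:laplacian} is the one-step mass balance of the stopped walk, and \eqref{eq:rec-eq}, \eqref{eq:trans-eq} then follow from \cref{lemma:IBP} applied to $G_s\subseteq G_t\subseteq G$; the paper reads $\nabla_t\rho_t(e)$ as the net probability flow across $e$ and sums over the edges at $v$, while you write the forward equation and make the dead/alive split explicit. Your convergence check, which the paper omits, works for every bounded $\phi$ in both identities, since the pairing only involves edges of $E_s$ and $\sum_{e\in E_s}c(e)\rho_u(e^-)\le\sum_v\nu_u(v)\le 1$ for $u\in\set{s,t}$; so it already covers all $\phi\in\ell^2(V)\subseteq\ell^\infty(V)$ without restricting to finite support or finite energy.
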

\begin{proof}
  First observe that
  \[
  \nabla_t \rho_t (e) = c(e) (\rho_t(e^-) - \rho_t(e^+)) = \frac{c(e)}{\pi_t(e^-)} \nu_t(e^-) - \frac{c(e)}{\pi_t(e^+)} \nu_t(e^+)
  \]
  equals the net flow of $X_{t \wedge \tau}$ along edge $e$ at time $t$. Therefore, summing over all multi-edges between $v$ and its neighbor $u$, we have
  \[
  \sum_{\substack{e^- = v \\ e^+ = u}} \nabla_t \rho_t(e) = \mathbb{P}(X_{t \wedge \tau} = v, X_{(t+1) \wedge \tau} = u) - \mathbb{P}(X_{t \wedge \tau} = u, X_{(t+1) \wedge \tau} = v).
  \]
  Summing over all neighbors $u$ of $v$, we obtain
  \[
  (\Delta_t \rho_t)(v) = \sum_{e^- = v} \nabla_t \rho_t(e) = \mathbb{P}(X_{t \wedge \tau} = v) - \mathbb{P}(X_{(t+1) \wedge \tau} = v) = \mu_t(v) - \mu_{t+1}(v)
  \]
  which proves \eqref{eq:laplacian}. Now by combining this with \cref{lemma:IBP}, we obtain \eqref{eq:rec-eq} and \eqref{eq:trans-eq}.
\end{proof}
We conclude this subsection with the following bound on $J_T = \sum_{t = 0}^{T-1} \nabla_t \rho_t$.
\begin{lemma}\label{lemma:J_bound}
  Let $J_T = \sum_{t=0}^{T-1} \nabla_t \rho_t$. Then
  \[
    \| J_T \|_r^2 \le 2 \sum_{t = 0}^{T-1} \rho_t(x).
  \]
\end{lemma}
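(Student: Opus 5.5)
The plan is to expand the squared norm of $J_T$ bilinearly and evaluate every cross term with \eqref{eq:rec-eq} of \cref{lemma:Laplacian_flow}. By that lemma's sum telescopes in the earlier index, leaving only the initial mass at $x$ and nonnegative terms carrying a minus sign. Concretely, write
\[
\| J_T \|_r^2 = \sum_{t=0}^{T-1} \| \nabla_t \rho_t \|_r^2 + 2 \sum_{0 \le s < t \le T-1} \langle \nabla_s \rho_s, \nabla_t \rho_t \rangle_r .
\]
For $s \le t$, apply \eqref{eq:rec-eq} with $\phi = \rho_t$ to get $\langle \nabla_s \rho_s , \nabla_t \rho_t \rangle_r = \langle \mu_s - \mu_{s+1}, \rho_t \rangle$. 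This includes the diagonal case $s=t$, where it reads $\| \nabla_t \rho_t \|_r^2 = \langle \mu_t - \mu_{t+1}, \rho_t\rangle$.

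Next, rewrite the expansion as twice the sum over all pairs $s \le t$ minus the diagonal:
\[
\| J_T \|_r^2 = \sum_{t=0}^{T-1} \Bigl( 2 \sum_{s=0}^{t} \langle \mu_s - \mu_{s+1}, \rho_t \rangle - \langle \mu_t - \mu_{t+1}, \rho_t \rangle \Bigr).
\]
The inner sum telescopes to $\langle \mu_0 - \mu_{t+1}, \rho_t \rangle$. Since $X_0 = x$ deterministically, $\mu_0 = \delta_x$, so this equals $\rho_t(x) - \langle \mu_{t+1}, \rho_t \rangle$. Collecting terms gives the identity
\[
\| J_T \|_r^2 = \sum_{t=0}^{T-1} \Bigl( 2\rho_t(x) - \langle \mu_t + \mu_{t+1}, \rho_t \rangle \Bigr).
\]
Because $\mu_t, \mu_{t+1}, \rho_t \ge 0$ pointwise, dropping the subtracted pairings yields $\| J_T \|_r^2 \le 2 \sum_{t<T} \rho_t(x)$.

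The step I expect to need the most care is justifying the bilinear manipulations when $V$ is infinite and vertices may have infinitely many neighbours. The issue is to ensure each $\nabla_t \rho_t$ has finite energy and that each pairing $\langle \mu_s - \mu_{s+1}, \rho_t\rangle$ converges absolutely, so that the expansion and the exchange of finite sums with the infinite vertex sums are legitimate.
\begin{itemize}
\item For the energy, I would use the diagonal identity $\| \nabla_t \rho_t \|_r^2 = \langle \mu_t - \mu_{t+1}, \rho_t \rangle$. Alternatively, I would bound $\| \nabla_t \rho_t \|_r^2 \le 2\sum_v \pi_t(v)\rho_t(v)^2 = 2\sum_v \nu_t(v)^2/\pi_t(v)$.
\item Finiteness then reduces to controlling $\nu_t/\pi_t$, which I would get by comparing with $\nu_{t-1}$: mass at $v$ at time $t$ arrives through edges whose conductances are counted in $\pi_t(v)$.
\item If this bound proves awkward, the fallback is to first truncate to a finite exhaustion, i.e.\ stop the walk on exiting a finite set $B$, where all sums are finite. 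I would then pass to the limit $B \uparrow V$ using monotone convergence in the nonnegative quantities $\rho_t(x)$, and lower semicontinuity of the energy for the left-hand side.
\end{itemize}
Vertices with $\pi_t(v)=0$ are handled by the convention that the walk stays put there: such vertices carry no edges of $G_t$, so they contribute nothing to $\nabla_t \rho_t$.
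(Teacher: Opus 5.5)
Your proposal is correct and is essentially the paper's proof. You expand $\|J_T\|_r^2$ bilinearly, evaluate every pairing (diagonal included) via \eqref{eq:rec-eq} with $\phi=\rho_t$, telescope to the exact identity $\|J_T\|_r^2=\sum_{t=0}^{T-1}\bigl(2\rho_t(x)-\langle\mu_t+\mu_{t+1},\rho_t\rangle\bigr)$, and drop the nonnegative pairings; summing over $s\le t$ and subtracting the diagonal once is only a regrouping of the paper's off-diagonal/diagonal split.

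Your convergence remarks go beyond the paper, which leaves this point implicit, and the comparison you sketch does settle it. Since conductances only increase, $\nu_t(v)\le\sum_u\rho_{t-1}(u)c_{t-1}(u,v)\le\pi_t(v)\sup_u\rho_{t-1}(u)$. Hence $\sup_v\rho_t(v)\le 1/\pi_0(x)$, and all the energies and pairings are absolutely convergent.
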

\begin{proof}
  We may write
  \[
  \| J_T \|_r^2 = \sum_{t = 0}^{T-1} \| \nabla_t \rho_t \|_r^2 + 2\sum_{t = 1}^{T-1} \sum_{s = 0}^{t-1} \langle \nabla_s \rho_s , \nabla_t \rho_t \rangle_r.
  \]
  The off-diagonal terms telescope as
  \[
  \sum_{s = 0}^{t-1} \langle \nabla_s \rho_s , \nabla_t \rho_t \rangle_r = \sum_{s = 0}^{t-1} \langle \Delta_s \rho_s , \rho_t \rangle = \sum_{s = 0}^{t-1} \langle \mu_s - \mu_{s+1} , \rho_t \rangle = \langle \mu_0 - \mu_t , \rho_t \rangle
  \]
  by \cref{lemma:Laplacian_flow}, while the diagonal terms are similarly
  \[
  \langle \nabla_t \rho_t , \nabla_t \rho_t \rangle_r = \langle \Delta_t \rho_t , \rho_t \rangle = \langle \mu_t - \mu_{t+1} , \rho_t \rangle.
  \]
  Therefore,
  \begin{align*}
    \| J_T \|_r^2 &= \sum_{t = 0}^{T-1} \left( \langle \mu_t - \mu_{t+1} , \rho_t \rangle + 2\langle \mu_0 - \mu_t , \rho_t \rangle \right) \\
    &= \sum_{t = 0}^{T-1} \left( 2 \langle \mu_0 , \rho_t \rangle - \langle \mu_t , \rho_t \rangle - \langle \mu_{t+1} , \rho_t \rangle \right) \\
    &\le 2 \sum_{t = 0}^{T-1} \rho_t(x).
  \end{align*}
  as $\mu_0=\delta_x$.
\end{proof}
\section{Increasing recurrence and transience}\label{sec:inc}
We first prove \cref{thm:inc_rec}.
Let the random walk start at $X_0=x$.
Note that it suffices to prove that $X_t$ always returns to $x$ with probability $1$.
Indeed, if $X_t$ returns to $x$ at some time $t_0$, potentially a random stopping time, then by the strong Markov property the process $X_{t_0 + t}$ is a monotone random walk starting at $x$, and hence will also return to $x$ with probability $1$, and so $X_t$ almost surely visits $x$ infinitely often.
For any other point $y$ in the connected component of $x$ (in $G$), choose some path $\gamma$ from $x$ to $y$. This walk becomes available at some finite time, after which the probability of $X_t$ following $\gamma$ immediately after visiting $x$ is at least 
\[
\prod_{e \in \gamma} \frac{c(e)}{\pi(e^-)}
\]
Since $X_t$ visits $x$ infinitely often, $X_t$ will visit $y$ infinitely often as well. Thus it suffices to prove that $X_t$ returns to $x$ with probability $1$.
\begin{proof}[Proof of \cref{thm:inc_rec}]
  Let $B$ be a finite set containing $x$.
  Define the stopping time
  \[ \tau_B = \inf \{ t > 0 : X_t \in B^c \text{ or } X_t = x \} \]
  and define $\mu_t, \nu_t, \rho_t$ as in \cref{subsec:basics} with respect to $\tau_B$.
  Let $\phi_B$ be the harmonic function on $G$ with boundary conditions $\phi_B(x) = 0$ and $\phi_B|_{B^c} = 1$.
  We know that $\| \nabla \phi_B \|_r^2 = \Reff^{G}(x \leftrightarrow B^c)^{-1}$.
  Moreover, recalling the definition of $J_T$ from \cref{lemma:J_bound}, by \cref{lemma:IBP,lemma:Laplacian_flow},
  \[
    \langle J_T, \nabla \phi_B \rangle_r = \sum_{t = 0}^{T-1} \langle \nabla_t \rho_t , \nabla \phi_B \rangle_r = \sum_{t = 0}^{T-1} \langle \Delta_t \rho_t , \phi_B \rangle = \sum_{t = 0}^{T-1} \langle \mu_t - \mu_{t+1} , \phi_B \rangle = \langle \mu_0 - \mu_T , \phi_B \rangle = - \mathbb{E}[\phi_B(X_{T \wedge \tau_B})].
  \]
  By Cauchy--Schwarz and \cref{lemma:J_bound}, we have
  \[
    \mathbb{E}[\phi_B(X_{T \wedge \tau_B})] = -\langle J_T, \nabla \phi_B \rangle_r \le \| J_T \|_r \| \nabla \phi_B \|_r \le \sqrt{\frac{2}{\pi_0(x) \Reff^{G}(x \leftrightarrow B^c)}}.
  \]
  The last inequality comes from the fact that $\rho_t(x) = 0$ for $t > 0$ and $\rho_0(x)=\frac{1}{\pi_0(x)}$.
  As $\tau_B<\infty$ almost surely since $B$ is finite, taking the limit as $T\to\infty$ and then the limit as $B\uparrow V$ so that $\Reff^G(x\oto B^c)\to\infty$, we obtain
  \[
  \lim_{B \uparrow V} \mathbb{E}[\phi_B(X_{\tau_B})] = \lim_{B\uparrow V} \P(X_{\tau_B}\in B^c) = 0,
  \]
  so $X_t$ returns to $x$ almost surely.
\end{proof}
The proof of \cref{thm:inc_trans} follows the same lines as the proof of \cref{thm:inc_rec} but for two changes: we do not kill the walk, and the accumulated flow $J_T$ is tested against the harmonic potential on $G_0$ instead of $G$.
Thus, instead of bounding the escape probability $\P(X_{\tau_B}\in B^c)$, we bound the expected number of visits to $x$ by the walk $X_t$ in terms of the effective resistance $\Reff^{G_0}(x\oto\infty)$.
This only shows that the expected number of visits is finite; to prove that the return probability is strictly less than 1, i.e., the escape probability is strictly positive, it suffices to show that for sufficiently large $N$, replacing the initial graphs $G_0,G_1,\dots,G_N$ all with $G_0$ (which yields an absolutely continuous distribution compared to the original law of $(X_t)_{t\geq0}$), we escape with positive probability.
Not returning before time $N$ is bounded below by the escape probability on $G_0$, which is strictly positive, and the probability of returning after time $N$ but not before time $N$ tends to 0 as $N\to\infty$, via a modification of our previous argument bounding the expected number of visits to $x$, stopped if the walk returns to $x$ before time $N$.
\begin{proof}[Proof of \cref{thm:inc_trans}]
  We first show the easier claim that $\sum_{t = 0}^{\infty} \mathbb{P}(X_t = x) < \infty$. For this part, we do not need a stopping time and can directly work on the infinite graph. Hence $\mu_t = \nu_t$ is simply the law of $X_t$. Also define
\[
  \Lambda(T) = \sum_{t = 0}^{T-1} \rho_t(x) = \sum_{t = 0}^{T-1} \frac{\mathbb{P}(X_t = x)}{\pi_t(x)}.
\]
  By \cref{lemma:J_bound}, we have $\| J_T \|_r^2 \le 2\Lambda(T)$. Let $\psi_0$ be the voltage function corresponding to the unit current flow from $x$ on the \emph{initial} graph $G_0$ (which exists since $G_0$ is transient). We know that $\Delta_0 \psi_0 = \mathbf{1}_x$ and $\| \nabla_0 \psi_0 \|_{r_0}^2 = \Reff^{G_0}(x \oto \infty)$. Moreover, by \cref{lemma:Laplacian_flow},
  \[
    \langle J_T, \nabla_0 \psi_0 \rangle_r = \sum_{t = 0}^{T-1} \langle \nabla_t \rho_t , \nabla_0 \psi_0 \rangle_r = \sum_{t = 0}^{T-1} \langle \rho_t , \Delta_0 \psi_0 \rangle = \sum_{t = 0}^{T-1} \langle \rho_t , \mathbf{1}_x \rangle = \sum_{t = 0}^{T-1} \rho_t(x) = \Lambda(T).
  \]
  Therefore,
  \[
    \Lambda(T)^2 = \langle J_T, \nabla_0 \psi_0 \rangle_r^2 \le \| J_T \|_r^2 \| \nabla_0 \psi_0 \|_r^2 \le 2 \Lambda(T)\Reff^{G_0}(x \oto \infty),
  \]
  which implies
  \[    
  \Lambda(T) \le 2\Reff^{G_0}(x \oto \infty).
  \]
  Taking the limit as $T \to \infty$, we obtain
  \[
  \sum_{t = 0}^{\infty} \mathbb{P}(X_t = x) = \sum_{t = 0}^{\infty} \pi_t(x) \rho_t(x) \le \pi_{\infty}(x) \lim_{T \to \infty} \Lambda(T) \le 2\pi_{\infty}(x) \Reff^{G_0} (x \oto \infty).
  \]
  In other words, the expected number of visits to $x$ is bounded by $2 \pi_\infty(x) \Reff^{G_0}(x \leftrightarrow \infty)$, which is finite since $G_0$ is transient.
  The same argument shows that the expected number of visits to any other point is also finite; in particular, starting at $x$ but studying the expected number of visits to $y$, letting $\tau_y$ denote the hitting time of $y$, our previous argument implies
  \[ \sum_{t=0}^\infty\P(X_t=y)\leq 2\pi_\infty(y)\E\bracket{\Reff^{G_{\tau_y}}(y\to\infty)} \leq 2\pi_\infty(y)\Reff^{G_0}(y\oto\infty), \]
  where the second inequality follows from Rayleigh monotonicity.

  Now we proceed to the stronger claim that there exists some positive probability $\delta_x$ of non-return. Our strategy here is to instead consider the random walk on the static graph $G_0$ until some time $N$, after which we transition to the dynamic graph $G_{N + 1}, G_{N + 2}, \dots$. This gives us better control over the probability of returning before time $N$, and changing from this to the true dynamic walk cannot destroy the positive-probability event of non-return. To this end, define the new dynamic graph $G_t^N = (G_0, G_0, \dots , G_0, G_{N + 1}, G_{N + 2}, \dots )$ and the dynamic random walk $X_t^N$ and operators $\nabla_t^N, \Delta_t^N$ with respect to this sequence. We define the stopping time
  \[    
  \sigma_x^N := \inf \{ 1 \le t \le N : X_t^N = x \}
  \]
  to be the first return to $x$ before time $N$ (so $\sigma_x^N = \infty$ if such an event does not occur) and relevant quantities $\mu_t^N, \nu_t^N, \rho_t^N$, and $J_T^N$ as before. Also define
  \[
  q_N(x) := \mathbb{P}(\sigma_x^N = \infty)
  \]
  to be the probability that the walk does not return to $x$ before $N$. We know that
  \[    
  \lim_{N \to \infty} q_N(x) = q_{\infty}(x) := \frac{1}{\pi_0(x) \Reff^{G_0}(x \oto \infty)} > 0.
  \]
  From here, repeat the same argument as before but with the stronger bound of 
  \[
    \| J_T^N \|_2^2 \le q_N(x) \left( 2 \sum_{t = N}^{T-1} \rho_t^N(x) + \frac{1}{\pi_0(x)} \right),
  \]
  which we prove in \cref{lem:J_trans_bound} below; this yields, recalling $\rho_t^N(x)=0$ for all $1 \leq t \leq N$ and $\rho_0^N(x)=\frac{1}{\pi_0(x)}$,
  \[ 
    \paren{\sum_{t=N}^{T-1}\rho_t^N(x) + \frac{1}{\pi_0(x)}}^2 = \Lambda^N(T)^2 \leq q_N(x)\paren{2\sum_{t=N}^{T-1}\rho_t^N(x)+\frac{1}{\pi_0(x)}}\Reff^{G_0}(x\oto\infty).
  \]
  After solving a quadratic equation, this gives us
  \begin{align*}
  \sum_{t = N}^{T-1} \rho_t^N(x) &\le q_N \Reff^{G_0}(x \oto \infty) - \frac{1}{\pi_0(x)} + \sqrt{q_N \Reff^{G_0}(x \oto \infty) \left( q_N \Reff^{G_0}(x \oto \infty) - \frac{1}{\pi_0(x)}\right)} \\
  &= \left(q_N - q_{\infty} + \sqrt{(q_N - q_{\infty}) q_N} \right) \Reff^{G_0}(x \oto \infty),
  \end{align*}
  which converges to zero as $N \to \infty$. The right hand side is independent of $T$, so we can send $T\to\infty$ and conclude that 
  \[    
  \lim_{N \to \infty} \mathbb{P}(X_t^N \ne x \text{ for all }t \ge 1) = \lim_{N \to \infty}q_n(x)\mathbb{P}(X_t^N \neq x \text{ for all } t > N \mid \sigma_x^N = \infty)= q_{\infty}(x),
  \]
  as
  \[ 
    \mathbb{P}(X_t^N \neq x \text{ for all } t > N\mid \sigma_x^N = \infty)
    \geq q_N(x) \paren{1 - \sum_{t=N}^{\infty} \nu_t^N(x)}
    \geq q_{\infty}(x) \paren{1 - \pi_\infty(x) \sum_{t=N}^{\infty}\rho_t^N(x)}
  \]
  where the last sum converges to 0 as $N\to\infty$.
  Thus, the probability $\mathbb{P}(X_t^N \ne x \text{ for all } t \ge 1)$ is strictly positive for sufficiently large $N$. Every finite path possible under $X_t^N$ has a positive probability of occurring for $X_t$, since conductances increase, so conditioning on the first $N$ steps, this positivity transfers to the original walk, i.e., $\mathbb{P}(X_t \ne x \text{ for all } t \ge 1) > 0$.
\end{proof}
\begin{lemma}
\label{lem:J_trans_bound}
    \[
    \| J_T^N \|_r^2 \le q_N(x) \paren{\sum_{t=N}^{T-1}\rho_t^N(x) + \frac{1}{\pi_0(x)}}
    \]
\end{lemma}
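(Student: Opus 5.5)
The plan is to redo the computation in the proof of \cref{lemma:J_bound} for the modified walk $X^N_t$, killed at $\sigma_x^N$, and to keep the terms that were simply dropped there. The point is that walkers killed at $\sigma_x^N$ stay at $x$ as dead mass in $\mu^N_t$, and this mass can be used to cancel most of the positive contribution. \cref{lemma:IBP} and \cref{lemma:Laplacian_flow} apply verbatim to the monotone sequence $G^N_t$ with the stopping time $\sigma^N_x$. Expanding $\|J^N_T\|_r^2$ into diagonal and off-diagonal terms and telescoping as before gives the exact identity
\[
\|J^N_T\|_r^2=\sum_{t=0}^{T-1}\Bigl(2\langle\mu^N_0,\rho^N_t\rangle-\langle\mu^N_t,\rho^N_t\rangle-\langle\mu^N_{t+1},\rho^N_t\rangle\Bigr),
\]
where $\mu^N_0=\delta_x$, so that $\langle \mu^N_0,\rho^N_t\rangle=\rho^N_t(x)$.

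Next I split the sum over $t$ into three ranges.
\begin{itemize}
\item For $1\le t\le N-1$, a living walker cannot sit at $x$, so $\rho^N_t(x)=0$. Since $\mu^N_t,\mu^N_{t+1}\ge0$ and $\rho^N_t\ge0$, these summands are nonpositive and can be discarded.
\item For $t\ge N$, I use the lower bounds $\mu^N_t(x)\ge \P(\sigma^N_x<\infty)=1-q_N(x)$ and $\mu^N_{t+1}(x)\ge 1-q_N(x)$, because every killed walker is frozen at $x$. Dropping the remaining nonnegative parts of $\langle\mu^N_t,\rho^N_t\rangle$ and $\langle\mu^N_{t+1},\rho^N_t\rangle$, each such summand is at most $2\rho^N_t(x)-2(1-q_N(x))\rho^N_t(x)=2q_N(x)\rho^N_t(x)$. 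This yields the factor $q_N(x)$ in front of $\sum_{t\ge N}\rho^N_t(x)$.
\item For $t=0$, the summand equals $\rho^N_0(x)-\langle\mu^N_1,\rho^N_0\rangle=\frac{1-\P(X^N_1=x)}{\pi_0(x)}$, since $\rho^N_0=\delta_x/\pi_0(x)$.
\end{itemize}

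The main obstacle is the $t=0$ term. Bounded crudely, it contributes $\frac1{\pi_0(x)}$ rather than $\frac{q_N(x)}{\pi_0(x)}$, because the dead mass only accumulates after time $0$. To recover the factor $q_N(x)$ I would not discard the negative contributions from $1\le t<N$. Instead I would use them to cancel against the $t=0$ term.

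Concretely, for $1\le t<N$ one has $\sum_{s<t}\langle\nabla_s\rho_s,\nabla_t\rho_t\rangle_r=\langle\mu_0-\mu_t,\rho_t\rangle=-\langle\mu_t,\rho_t\rangle$, together with the diagonal term $\langle\mu_t-\mu_{t+1},\rho_t\rangle$. Summing these should reproduce, up to a telescoping, the energy of the unit flow on the static graph $G_0$ up to time $N$, i.e.\ $\|\sum_{t<N}\nabla_0\rho^N_t\|_{r}^2$. This energy is exactly computable in terms of the Green function of $G_0$ killed at $x$. On the static graph $G_0$ the killed walk's accumulated flow is the current flow, whose energy is $\frac1{\pi_0(x)}$ times the probability of not having been absorbed, which is the source of $q_N(x)/\pi_0(x)$.

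So the fallback plan is to first prove the bound $2q_N(x)\sum_{t\ge N}\rho^N_t(x)+\frac1{\pi_0(x)}$, and then sharpen the $t<N$ block by this static-graph computation. In that computation I would replace $\mu_{N}$ by its living part of mass $q_N(x)$ and check that the cross terms between the blocks $t<N$ and $t\ge N$ only involve $\langle\mu^N_0-\mu^N_t,\rho^N_t\rangle$. These cross terms have already been accounted for in the second range above. I expect the careful bookkeeping of this block decomposition to be the only delicate step.
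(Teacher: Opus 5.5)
Your decomposition matches the paper's. You bound the rows $t\ge N$ by $2q_N(x)\rho^N_t(x)$ exactly as the paper does, using that the killed mass $1-q_N(x)$ is frozen at $x$. The rows $t<N$ sum to exactly $\|J^N_N\|_r^2$, since this is the same identity with $T=N$, so the cross terms between the two blocks need no extra bookkeeping.

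Everything therefore hinges on $\|J^N_N\|_r^2\le q_N(x)/\pi_0(x)$, the one step that is new relative to \cref{lemma:J_bound}. Your proposal asserts this bound without proving it, and the justification you give is not correct. The accumulated flow on $[0,N)$ is $J^N_N=\nabla_0 g$ with $g:=\sum_{t<N}\rho^N_t$, and its divergence is
\[
\Delta_0 g=\mu_0-\mu^N_N=q_N(x)\,\delta_x-\nu^N_N .
\]
So it is not $q_N(x)$ times a unit current flow into a zero-potential sink. Its sinks are the walkers still alive at time $N$, where $g$ need not vanish, so in general its energy is not $q_N(x)/\pi_0(x)$.

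Likewise, ``replacing $\mu^N_N$ by its living part'' is backwards. The dead mass $1-q_N(x)$ at $x$ is what cancels against $\mu_0=\delta_x$, and the living part is what must be discarded. Your fallback bound, with $1/\pi_0(x)$ in place of $q_N(x)/\pi_0(x)$, is not a usable intermediate step. With it, the quadratic inequality in the proof of \cref{thm:inc_trans} no longer forces $\sum_{t\ge N}\rho^N_t(x)\to0$.

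The missing idea is that the graph is static on this block. The Laplacian is then symmetric, and the whole block telescopes at once rather than row by row:
\[
\|J^N_N\|_r^2=\langle\nabla_0 g,\nabla_0 g\rangle_r=\langle\Delta_0 g,g\rangle=\langle\mu_0-\mu^N_N,g\rangle\le\bigl(1-\mu^N_N(x)\bigr)g(x)=\frac{q_N(x)}{\pi_0(x)}.
\]
This uses $\mu^N_N\ge0$ and $g\ge0$. It also uses $g(x)=\rho^N_0(x)=1/\pi_0(x)$, because $\rho^N_t(x)=0$ for $1\le t<N$, and $\mu^N_N(x)=\P(\sigma^N_x\le N)=1-q_N(x)$. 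The dropped term is exactly $\langle\nu^N_N,g\rangle\ge0$. This is the paper's argument.

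In your row form the same quantity reads $\frac{1-\mu^N_1(x)}{\pi_0(x)}-\sum_{t=1}^{N-1}\langle\mu^N_t+\mu^N_{t+1},\rho^N_t\rangle$. Seeing directly that these negative terms, which are supported off $x$, absorb $\P(2\le\sigma^N_x\le N)/\pi_0(x)$ is not transparent. The symmetric rewriting is what makes it immediate.

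Finally, what you and the paper's proof actually obtain is $q_N(x)\bigl(2\sum_{t=N}^{T-1}\rho^N_t(x)+\frac{1}{\pi_0(x)}\bigr)$. The displayed statement drops the factor $2$, but the version with the $2$ is the one used in the proof of \cref{thm:inc_trans}, so that is the bound to aim for.
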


\begin{proof}
    Recalling the proof of \cref{lemma:J_bound}, we may write
    \[
    \| J_T^N \|_2^2 = \| J_N^N \|_2^2 + \sum_{t = N}^{T-1} \| \nabla_t \rho_t^N \|_r^2 + 2 \sum_{t = N}^{T-1} \sum_{s = 0}^{t-1} \bra \nabla_s \rho_s^N, \nabla_t \rho_t^N \ket_r.
    \]
    Since the graph is static up to time $N$, by \cref{lemma:Laplacian_flow},
    \[  
    \| J_N^N \|_2^2 = \left\| \sum_{t = 0}^{N - 1} \nabla_0 \rho_t^N \right\|_r^2 = \bra \mu_0 - \mu_N^N , \sum_{t=0}^{N-1} \rho_t^N \ket \le \sum_{t = 0}^{N - 1} (\mu_0(x) - \mu_N^N (x))(\rho_t^N(x)) =  q_N(x)\sum_{t = 0}^{N-1}\rho_t^N(x) = \frac{q_N(x)}{\pi_0(x)},
    \]
    where the inequality follows from recalling $\mu_0=\delta_x$.
    Following the proof of \cref{lemma:J_bound}, the remaining terms are similarly bounded by
    \[
        \sum_{t = N}^{T-1} \langle 2 \mu_0 - \mu_t^N - \mu_{t+1}^N, \rho_t \rangle \le \sum_{t=N}^{T-1} (2\mu_0(x) -\mu_t^N(x) - \mu_{t+1}^N(x))\rho_t(x) \le 2q_N(x) \sum_{t = N}^{T-1} \rho_t(x)
    \]
    since $\mu_t(x) \ge 1 - q_N(x)$ whenever $t \ge N$.
\end{proof}

\section{Continuous time}\label{sec:ct}
Here we prove \cref{thm:ct_inc_rec,thm:ct_inc_trans}. To this end, we generalize the notion of a monotone graph to the continuous-time setting. We assume that the graph does not have any loops, as they do not affect the dynamics of the random walk. Previously, we identified increasing the conductance of an edge with adding a new parallel edge. In continuous time, a natural generalization is to view the edge as a growing interval $C_t(e) = [0, c_t(e)] \subseteq \mathbb{R}$. From this perspective, we can define the derivative $D_t : L^2(V) \to L^2(E \times [0, \infty))$ and Laplacian $\Delta_t$ by
\[
  (D_t f)(e, u) = \1{u \le c_t(e)}(f(e^-) - f(e^+)), \quad (\Delta_t f)(v) = \sum_{e^- = v} \int_0^{\infty} (D_t f)(e, u) \di u = \sum_{e^- = v} c_t(e) \di f(e).
\]
This naturally leads to the inner product
\[
  \langle D_t f, D_s g \rangle_{\mathcal{H}} = \sum_{e \in E} \int_0^{\infty} (D_t f)(e, u) (D_s g)(e, u) \di u = \sum_{e \in E} \min(c_t(e), c_s(e)) (\di f)(e) (\di g)(e)
\]
where $\mathcal{H} = L^2(E \times [0, \infty))$. Now if $G_s \le G_t$, then 
\[
\langle D_s f, D_t g \rangle_{\mathcal{H}} = \langle D_s f, D_s g \rangle_{\mathcal{H}} = \langle \Delta_s f, g \rangle.
\]

Recall the definitions of the measures $\mu_t$ and $\nu_t$ from the discrete-time setting. In continuous time, the main difference is the following analog of \eqref{eq:laplacian}, which requires using $\nu_t$ instead of $\rho_t$.
This is simply because the instantaneous jump rate is $c_t(e)$, rather than $\frac{c_t(e)}{\pi_t(e^-)}$ as in the discrete-time setting. In order to circumvent the possibility of explosions, i.e., infinitely many jumps in a finite time interval, we restrict to stopping times that restrict the walk to a finite domain $B$. Since the jump rate is bounded by the maximum conductance over all vertices in $B$, which is finite, this ensures that explosions do not occur before this stopping time.
\begin{lemma}
Let $\tau$ be a stopping time such that $X_{t \wedge \tau}$ is contained in a finite set. Define $\mu_t$ and $\nu_t$ as
\[  
\mu_t(v) = \mathbb{P}(X_{t \wedge \tau} = v), \quad \nu_t(v) = \mathbb{P}(X_t = v , t < \tau).
\]
Then,
  \[
  \mu_0(v) - \mu_t(v) = \int_0^t \sum_{e^- = v} c_s(e)(\nu_s(v) - \nu_s(e^+)) \di s.
  \]
In other words,
\[
\Delta_t \nu_t = - \frac{\di}{\di t} \mu_t.
\]  
\end{lemma}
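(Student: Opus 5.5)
The plan is to derive this identity as the Kolmogorov forward equation for the time-inhomogeneous Markov chain $X_{t\wedge\tau}$. Because the stopped walk lives in a finite set $B$, the total jump rate while alive is at most $\Lambda_B:=\sup_{v\in B}\pi_\infty(v)<\infty$, uniformly in time. All the needed estimates then follow from elementary Poisson-clock bounds.

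\textbf{Step 1: a short-interval expansion.} Fix $v$ and compute $\mu_{s+h}(v)-\mu_s(v)$ for small $h>0$. On $[s,s+h]$ the clocks of edges at the current vertex ring at total rate $\int_s^{s+h}\pi_u(X_s)\di u\le \Lambda_B h$, so two or more jumps occur with probability $O(h^2)$, uniformly over starting vertices in $B$. Only the living walk can move. Up to $O(h^2)$, the walk at $v$ at time $s$ leaves along edge $e$ with probability $\int_s^{s+h}c_u(e)\di u$. Likewise, a living walk at $e^+$ enters $v$ along $-e$ with probability $\int_s^{s+h}c_u(e)\di u$.

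One subtlety concerns the stopping time $\tau$. If it ends the walk upon arriving at a vertex (for example, hitting $x$ or $B^c$), the walk still arrives at $v$ and $\mu$ counts it there. So $\mu$ changes exactly through jumps made by living walkers, while $\nu$ is evaluated before the jump. Combining these observations gives
\[
\mu_{s+h}(v)-\mu_s(v) = -\sum_{e^-=v}\Big(\int_s^{s+h}c_u(e)\di u\Big)\big(\nu_s(v)-\nu_s(e^+)\big)+O(h^2).
\]
The sum is really over the finitely many edges inside $B\cup\partial$-relevant vertices. Any remaining infinite tail of edges from $v$ is absolutely summable because $\sum_e c_u(e)\le\pi_\infty(v)<\infty$.

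\textbf{Step 2: integrate.} Partition $[0,t]$ into $n$ pieces and sum the increments. The error terms total $O(t^2/n)\to0$. The main terms form a Riemann-type sum. To pass to the limit I need $s\mapsto\nu_s(w)$ to be right-continuous, or at least continuous off a null set. This holds because $|\nu_{s+h}(w)-\nu_s(w)|\le\Lambda_B h$ plus the probability that $\tau\in(s,s+h]$ without a jump. That second term is also bounded by the jump probability, since $\tau$ only occurs at jump times for these hitting-type stopping times. Monotone $c_u(e)$ is bounded and measurable, so dominated convergence gives
\[
\mu_0(v)-\mu_t(v)=\int_0^t\sum_{e^-=v}c_s(e)\big(\nu_s(v)-\nu_s(e^+)\big)\di s.
\]
This is $\int_0^t(\Delta_s\nu_s)(v)\di s$, and differentiating almost everywhere yields $\Delta_t\nu_t=-\frac{\di}{\di t}\mu_t$.

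\textbf{Main obstacle.} The delicate step is the bookkeeping at the killing time. One must check that a jump into a dead state is still counted in $\mu$, and that $\tau$ introduces no jumps of $\mu$ other than walker jumps. A general stopping time could kill the walk without moving it, which is harmless for $\mu$ since $X_{t\wedge\tau}$ stays put. Once that is settled, the uniform $O(h^2)$ bound from the finite rate $\Lambda_B$ makes the limiting argument routine.
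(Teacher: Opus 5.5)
Your argument is correct in substance but takes a different route from the paper. The paper does no short-interval expansion. It lets $N_t(e)$ be the number of jumps of the stopped walk along $e$ by time $t$, and uses the pathwise identity $\mathbf{1}\{X_0=v\}-\mathbf{1}\{X_{t\wedge\tau}=v\}=\sum_{e^-=v}N_t(e)-\sum_{e^+=v}N_t(e)$. It then takes expectations using $\mathbb{E}N_t(e)=\int_0^t c_s(e)\nu_s(e^-)\,ds$ (the compensator of the Poisson clock on $e$, integrated against the indicator that the walk is alive at $e^-$), and finishes with the symmetry $c_s(e)=c_s(-e)$. You instead prove the Kolmogorov forward equation from scratch: a uniform two-jump estimate on each short interval, then a Riemann-sum limit. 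That route forces you to establish regularity of $s\mapsto\nu_s$. Your version is more elementary and gives the differential form directly. The paper's is shorter, and it makes the killing-time bookkeeping automatic, which you rightly flag as the delicate point: the counting identity holds path by path, and the compensator only involves $\mathbf{1}\{s\le\tau\}$, whatever kind of stopping time $\tau$ is.

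That bookkeeping is where your write-up needs tightening. Both the $O(h^2)$ error in Step 1 and the Lipschitz bound on $\nu$ in Step 2 use that $\tau$ occurs only at jump times. The lemma, however, allows any stopping time, and your remark that killing without moving is ``harmless for $\mu$'' does not address either step. For example, if $\tau\equiv t_0$ is deterministic, a walker alive at time $s<t_0<s+h$ can only jump during $(s,t_0]$. So the error on that interval can be of order $h$, and $\nu$ drops discontinuously at $t_0$.

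Both issues are easy to repair.
\begin{enumerate}[label=(\roman*)]
\item By the strong Markov property of the clocks at $\tau$, the extra error on $(s,s+h]$ is at most a constant times $\Lambda_B h\,\mathbb{P}(s<\tau\le s+h)$. Summed over the partition, this is $O(\Lambda_B h)\to0$.
\item The map $u\mapsto\nu_u(w)$ is right-continuous, with left limit $\mathbb{P}(X_{u-}=w,\,\tau\ge u)$ at each $u$. Since the walk has no fixed jump times, this differs from $\nu_u(w)$ only at the countably many atoms of the law of $\tau$. Hence the Riemann sums still converge by dominated convergence.
\end{enumerate}
For the hitting-type times $\sigma_B\wedge\tau_x$ that the paper actually uses, your argument is complete as written.
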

\begin{proof}
  Let $N_t(e)$ be the number of jumps $X_{s \wedge \tau}$ takes along edge $e$ up to time $t$. Its expected value is given by
  \[
  \mathbb{E} N_t(e) = \int_0^t c_s(e) \nu_s(e^-) \di s.
  \]
  On the other hand, by counting the number of times $X_{s \wedge \tau}$ leaves and enters $v$, we have
  \[
  \1{X_0 = v} - \1{X_{t \wedge \tau} = v} = \sum_{e^- = v} N_t(e) - \sum_{e^+ = v} N_t(e).
  \]
  Thus by taking expectations and using symmetry, we have
  \[
  \mu_0(v) - \mu_t(v) = \int_0^t \sum_{e^- = v} c_s(e)\nu_s(v) \di s - \int_0^t \sum_{e^+ = v} c_s(e)\nu_s(e^-) \di s = \int_0^t \sum_{e^- = v} c_s(e)(\nu_s(v) - \nu_s(e^+)) \di s. \qedhere
  \]
\end{proof}
From this point, the rest of the proof carries over from the discrete-time setting almost verbatim after replacing $\rho_t$ with $\nu_t$. For example, we have the following analog of \cref{lemma:J_bound}:
\[
  \left\| \int_0^t D_s \nu_s \di s \right\|_{\mathcal{H}}^2 \le 2 \int_0^t \nu_s(x) \di s.
\]
The only concern is the explosion time $\zeta \in [0 , \infty]$, which we treat in the proofs below. In both cases, define the stopping times
\[  
\sigma_B := \inf \{ t \ge 0 : X_t \notin B \}
\]
to be the first exit time. This ensures that $X_{t \wedge \sigma_B}$ makes finitely many steps and that $\sigma_B \uparrow \xi$ as $B \uparrow V$.
\begin{proof}[Proof of Theorem~\ref{thm:ct_inc_rec}]
    Repeat the proof of Theorem~\ref{thm:inc_rec} with $\tau_B = \sigma_B \wedge \tau_x$ where
    \[  
    \tau_x := \inf \{ t > 0 : X_t = x , X_s \ne x \text{ for some } s < t  \}\]
    and the modifications mentioned above. The same proof gives
    \[  
    \mathbb{P}(\tau_x > \sigma_B) \le \frac{\sqrt{2}}{\sqrt{\pi_0(x) \Reff^{G}(x \oto B^c)}}.
    \]
    Sending $B \uparrow V$, this implies $\mathbb{P}(\tau_x < \xi) = 1$, or in other words, $X_t$ returns to $x$ before the explosion time. By the Markov property, this also implies infinitely many returns to $x$ before $\xi$. Since the jump rate at $x$ is bounded above by $\pi_{\infty}(x)$, it follows that $\xi$ must be infinite.
\end{proof}

\begin{proof}[Proof of Theorem~\ref{thm:ct_inc_trans}]
    The same proof as Theorem~\ref{thm:inc_trans} shows that
    \[  
    \mathbb{E} \left[ \int_0^{T \wedge \sigma_B} \mathbf{1}_{\{ X_t = x \}} \di t \right] \le 2 \Reff^{G_0} (x \oto B^c).
    \]
    Taking $T \to \infty$ and $B \uparrow V$, we have
    \[  
    \mathbb{E} \left[ \int_0^{\xi} \mathbf{1}_{\{ X_t = x \}} \di t \right] \le 2 \Reff^{G_0} (x \oto \infty).
    \]
    In other words, the expected local time at $x$ before explosion is finite.
    Since the jump rate at $x$ is bounded by $\pi_\infty(x)$, this also ensures the expected number of visits is finite.
\end{proof}

\section{A counterexample for decreasing transience}\label{sec:dec_trans}
In this section we prove \cref{thm:dec_trans}.
Take vertex set $\Z$ and connect adjacent integers with edge conductance $c(n, n+1) = 2^n$, which causes positive drift.
For $n \geq 1$, also connect $n+1$ and $-n$ with conductance $4^n$, to be deleted at time $T_n = n^2$ (i.e., $c(n+1, -n) = 4^n \1{t < T_n}$.
This essentially folds $\Z$ into a doubled half-line; see \cref{fig:dec_trans} for a schematic of the construction.

\begin{figure}[htbp]
\centering
\begin{tikzpicture}[
  x=1cm,y=1cm,>=Latex,line cap=round,line join=round,
  rail/.style={line width=.8pt},
  rung/.style={line width=.8pt,dash pattern=on 3pt off 2pt},
  vertex/.style={circle,fill=black,inner sep=1.6pt},
  elabel/.style={font=\small,fill=white,inner sep=2pt},
  rlabel/.style={font=\footnotesize,fill=white,inner sep=3pt,align=center}
]
% Fold the ordinary integer line at the permanent edge {0,1}.
\foreach \name/\x in {a/0.5,b/3.0,c/5.5,n/9.2,p/12.1}{
  \coordinate (U\name) at (\x,1.15);
  \coordinate (V\name) at (\x,-1.15);
}
\draw[rail] (Ua) -- node[elabel,left] {$1$} (Va);
\draw[rail] (Ua) -- node[elabel,above] {$2$} (Ub);
\draw[rail] (Ub) -- node[elabel,above] {$2^2$} (Uc);
\draw[rail] (Va) -- node[elabel,below] {$2^{-1}$} (Vb);
\draw[rail] (Vb) -- node[elabel,below] {$2^{-2}$} (Vc);
\draw[rail] (Uc) -- (6.45,1.15);
\draw[rail] (8.25,1.15) -- (Un);
\draw[rail] (Vc) -- (6.45,-1.15);
\draw[rail] (8.25,-1.15) -- (Vn);
\node at (7.35,1.15) {$\cdots$};
\node at (7.35,-1.15) {$\cdots$};
\node at (7.35,0) {$\cdots$};
\draw[rail] (Un) -- node[elabel,above] {$2^{n+1}$} (Up);
\draw[rail] (Vn) -- node[elabel,below] {$2^{-(n+1)}$} (Vp);
\draw[rail,->] (Up) -- (14.0,1.15);
\draw[rail,->] (Vp) -- (14.0,-1.15);
\node[font=\small,above=3pt] at (13.75,1.15) {$+\infty$};
\node[font=\small,below=3pt] at (13.75,-1.15) {$-\infty$};
\draw[rung] (Ub) -- node[rlabel] {$4 \cdot \1{t < T_1}$} (Vb);
\draw[rung] (Uc) -- node[rlabel] {$4^2 \cdot \1{t < T_2}$} (Vc);
\draw[rung] (Un) -- node[rlabel] {$4^n \cdot \1{t < T_n}$} (Vn);
\draw[rung] (Up) -- node[rlabel] {$4^{n+1} \cdot \1{t < T_{n+1}}$} (Vp);
\foreach \name/\lab in {a/1,b/2,c/3,n/{n+1},p/{n+2}}{
  \node[vertex,label=above:{$\lab$}] at (U\name) {};
}
\foreach \name/\lab in {a/0,b/{-1},c/{-2},n/{-n},p/{-(n+1)}}{
  \node[vertex,label=below:{$\lab$}] at (V\name) {};
}
\draw[->,line width=.65pt] (3.5,2.05) -- (10.5,2.05)
  node[midway,above=2pt,font=\small] {limiting drift toward $+\infty$};
\draw[->,line width=.65pt] (10.5,-2.05) -- (3.5,-2.05)
  node[midway,below=2pt,font=\small] {limiting drift toward $+\infty$};
\end{tikzpicture}
\caption{The full integer line folded at $\{ 0, 1 \}$. Dashed vertical edges join $n+1$ to $-n$ and are deleted at $T_n=n^2$.}
\label{fig:dec_trans}
\end{figure}

This is a decreasing network where the limiting graph is $\mathbb{Z}$ with edge conductances $c_\infty(n, n+1) = 2^n$.
Since this is simply the biased random walk on $\mathbb{Z}$ that moves right with probability $\frac23$, we know it is transient.
The additional key observation is that the vertical edges dominate the total conductance, and once they are deleted, the remaining graph is a biased random walk on the integers.
Thus, the dominant behavior of the dynamic walk $X_t$ (at least on a heuristic level) is the following.
\begin{enumerate}[label = \arabic*.]
    \item At time $T_{n-1} = (n-1)^2$, $X_{T_{n-1}} =-n + 1$.
    \item From here, $X_t$ either hits $-n$ or follows the positive drift towards $n+1$. These two events happen with roughly equal probability.
    \item Once $X_t$ reaches $n+1$ or $-n$, it moves back and forth between these two points along the vertical edge $e_n$.
    \item The edge $e_n$ gets deleted at time $T_n$. Since $X_t$ alternates between even and odd numbers (so $t$ and $X_t$ always have the same parity), we have $X_{T_n} = -n$ and the process repeats itself.
\end{enumerate}

The remainder of this section is devoted to proving that the above scenario indeed occurs infinitely often, starting with $X_0=0$.
The second step is easy to justify, as we show in the following lemma.
\begin{lemma}
    For all $n\geq 1$,
    \[  
    \mathbb{P}(\tau_{n+1} < \tau_{-n} | X_{T_{n-1}} = -n + 1) = \frac{2^{2n}}{2^{2n+1} - 1} > \frac{1}{2}
    \]
    where both hitting times are measured from $T_{n-1}$.
\end{lemma}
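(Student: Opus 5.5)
The plan is to reduce the statement to gambler's ruin on a static weighted path, which can be solved exactly by the standard voltage/effective-resistance formula for hitting probabilities.

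\textbf{Step 1: locate the walk's excursion.} Condition on $X_{T_{n-1}}=-n+1$. Until $\tau_{n+1}\wedge\tau_{-n}$, the walk stays in the interval $I=\set{-n+1,\dots,n}$. At every time $t\ge T_{n-1}$ we examine which edges have an endpoint in $I$:
\begin{itemize}[label=$\circ$]
    \item The nearest-neighbour edges $(k,k+1)$ for $-n\le k\le n$ are permanent, with conductance $2^k$.
    \item Each vertical edge $e_k=\set{k+1,-k}$ with $k\le n-1$ has already been deleted, since $c(k+1,-k)=4^k\1{t<T_k}$ and $T_k\le T_{n-1}\le t$. This includes $e_{n-1}=\set{n,-n+1}$, which vanishes exactly at time $T_{n-1}$. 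For $n=1$ we read $T_0=0$, so there is no edge to worry about, and the start is $X_0=0$.
    \item Each vertical edge $e_k$ with $k\ge n$ has both endpoints $k+1$ and $-k$ outside $I$.
\end{itemize}

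\textbf{Step 2: reduce to a static walk.} By Step 1, for every vertex of $I$ the transition probabilities at all times $t\ge T_{n-1}$ coincide with those of the static network $c_\infty(k,k+1)=2^k$. The deletion of $e_n$ at $T_n$ touches only the boundary vertices $n+1$ and $-n$, so it is irrelevant before the exit time. Hence, until $\tau_{n+1}\wedge\tau_{-n}$, the walk has the law of the time-homogeneous random walk on the path $-n,\dots,n+1$ with conductances $2^k$. In particular the exit time is almost surely finite, because $I$ is finite.

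\textbf{Step 3: solve the gambler's ruin.} For a static network, the probability of hitting $n+1$ before $-n$ is the voltage $v$ with $v(-n)=0$ and $v(n+1)=1$. On a path this voltage increases linearly in accumulated resistance. The resistances are $r(k,k+1)=2^{-k}$, so
\begin{itemize}[label=$\circ$]
    \item the resistance from $-n$ to $-n+1$ is $2^{n}$;
    \item the total resistance from $-n$ to $n+1$ is $\sum_{k=-n}^{n}2^{-k}=2^{-n}(2^{2n+1}-1)$.
\end{itemize}
Therefore
\[
\P\bigl(\tau_{n+1}<\tau_{-n}\,\big|\,X_{T_{n-1}}=-n+1\bigr)
=\frac{2^{n}}{2^{-n}(2^{2n+1}-1)}=\frac{2^{2n}}{2^{2n+1}-1},
\]
and this exceeds $\frac12$ because $2^{2n+1}>2^{2n+1}-1$.

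\textbf{Main obstacle.} The only delicate point is the bookkeeping in Step 1: checking that $e_{n-1}$ is already gone at time $T_{n-1}$ (the indicator is strict, $t<T_{n-1}$), and that no surviving long edge is incident to $I$. Once this is settled, Step 3 is a routine computation.
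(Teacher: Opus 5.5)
Your proposal is correct and follows the paper's route. The paper conditions on $X_{T_{n-1}}=-n+1$, notes that until exiting $\{-n+1,\dots,n\}$ the walk is the static biased walk on $[-n,n+1]$, and solves the gambler's ruin. Your series-resistance computation $2^{n}/\bigl(2^{-n}(2^{2n+1}-1)\bigr)$ is an equivalent way of doing the paper's gambler's ruin with right-step probability $\frac23$, and your Step 1 supplies the edge bookkeeping that the paper leaves implicit.
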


\begin{proof}
Conditioned on $X_{T_{n-1}} = -n + 1$, $X_{T_{n-1} + t}$ follows a biased nearest-neighbor random walk on $[-n, n+1]$ until it hits an endpoint. The probability of moving right is $\frac23$ at each step, so a gambler's ruin computation shows that
\[  
    \mathbb{P}(\tau_{n+1} < \tau_{-n} | X_{T_{n-1}} = -n + 1) = \frac{2^{2n}}{2^{2n+1} - 1}. \qedhere
\]
\end{proof}

Since $\tau_{n+1} < \tau_{-n}$ implies that $X_t$ passes through $0$, it only remains to show that $X_{T_n} = -n$ infinitely often; as in the proof of \cref{thm:inc_rec}, knowing $X_t$ visits 0 infinitely often implies $X_t$ visits every vertex infinitely often, as $c_\infty$ is connected.
We will categorize the failure modes into two situations: the first is that the walk does not encounter the vertical edge $e_n$ at all, and the second is that it does not travel along $e_n$ even when given the opportunity.
The next two lemmas show that each of these events can be avoided.

\begin{lemma}
    $X_t$ encounters infinitely many vertical edges.
\end{lemma}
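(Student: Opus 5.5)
We read ``$X_t$ encounters the vertical edge $e_m$'' as: $X_t$ visits an endpoint $m+1$ or $-m$ of $e_m$ at some time $t<T_m=m^2$, i.e., while $e_m$ is still present. The plan is to show that the event $A_s$, that no vertical edge is encountered at any time $t\ge s$, has probability zero for every $s\in\N$. A countable union over $s$ then gives the lemma. The argument compares a speed bound coming from the deletion schedule with the linear speed of the biased walk.

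\textbf{Step 1 (a geometric invariant).} Define the \emph{level} of a vertex $v\in\Z$ as $\ell(v)=\abs{v-\tfrac12}$, so that $e_m$ joins the two vertices $m+1$ and $-m$ of level $m+\tfrac12$. A horizontal step changes the level by $\pm1$, except across the edge $\set{0,1}$ where it stays at $\tfrac12$; a vertical step preserves the level. Hence any path that reaches level $m+\tfrac12$ must visit $m+1$ or $-m$.

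Suppose $\ell(X_t)\ge m+\tfrac12$ and $t<m^2$, and suppose the walk started from a level below $m+\tfrac12$. Then at its first arrival at level $m+\tfrac12$, which happens at a time $\le t<T_m$, it encounters $e_m$.

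\textbf{Step 2 (coupling with the biased walk on $A_s$).} Fix $s$. Let $(Y_t)_{t\ge s}$ be the biased nearest-neighbor walk on $\Z$ with $c_\infty(n,n+1)=2^n$ (right-step probability $\tfrac23$) started from $Y_s=X_s$. Couple it with $X$ using the same uniforms, until the first time $X$ sits at an endpoint of a live vertical edge. Away from such endpoints the transition rule of $X$ at time $t$ is exactly the $c_\infty$ rule, since the only extra conductances are the live vertical edges. Therefore on $A_s$ we have $X_t=Y_t$ for all $t\ge s$.

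By the strong law for the biased walk, almost surely $Y_t/t\to\tfrac13$, so $\ell(Y_t)\ge t/4$ for all large $t$.

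\textbf{Step 3 (contradiction).} On $A_s\cap\set{Y_t/t\to\tfrac13}$, pick a large $t$ and let $m=\lceil\sqrt t\,\rceil+1$, so $t<m^2$. We have $\ell(X_t)=\ell(Y_t)\ge t/4\ge m+\tfrac12$. Since $\ell(X_s)$ is fixed, for large $t$ the level $m+\tfrac12$ is strictly greater than $\ell(X_s)$. By Step 1, $X$ then visited $m+1$ or $-m$ at some time in $[s,t]$ before $T_m$, i.e., it encountered $e_m$ after time $s$, contradicting $A_s$.

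It follows that $\P(A_s)=0$, and taking the union over $s$ proves the lemma.

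The main subtlety, which I expect to need the most care, is Step 2. One must check that ``not encountering a live vertical edge'' really means the walk's law coincides with $c_\infty$ at every step. At an endpoint of a live edge the normalization $\pi_t$ differs, and the definition of ``encounter'' is exactly what excludes those times. One must also make the coupling precise, using a common sequence of uniforms $U_t$ with the step decided by comparing $U_t$ to transition probabilities, so that $A_s$ is a subset of $\set{X_t=Y_t\ \forall t\ge s}$ measurably. The $\sqrt t$ versus linear-speed comparison is then routine.
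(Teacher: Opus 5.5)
Your proof is correct and follows the same route as the paper. If only finitely many vertical edges are met, the walk eventually evolves as the $\tfrac23$-biased walk on $\Z$, yet it must stay within about $\sqrt{t}$ of the origin, which contradicts its linear speed $\tfrac13$. Your common-uniforms coupling and the level function $\ell(v)=\abs{v-\tfrac12}$ make rigorous the two implications that the paper states informally.
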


\begin{proof}
    Suppose not. Then there exists some last vertical edge $e_N$ that $X_t$ encounters. This has two implications:
    \begin{itemize}
        \item $\{ X_t : t \ge T_N \}$ behaves as the biased random walk on $\mathbb{Z}$.
        \item $\abs{X_t} < \sqrt{t}+1$ whenever $t \ge T_N$, since otherwise $X_t$ would encounter a vertical edge $e_{N'}$ for $N'>N$.
    \end{itemize}
    These two facts are contradictory, since the biased random walk on $\mathbb{Z}$ has a linear drift, thus
    \[  
    \lim_{t \to \infty} \frac{X_t}{t} = \frac{1}{3},
    \]
    while the second implication requires sublinear growth.
\end{proof}

\begin{lemma}
    Let $A_n = \{ -n ,n+1 \}$ and define
    \[  
    B_n = \{ X_t \in A_n, X_{t+1} \notin A_n \text{ for some } 0 \le t < T_n \}.
    \]
    Then, $B_n$ occurs finitely often.
\end{lemma}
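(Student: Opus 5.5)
We will prove that $\sum_n \P(B_n)<\infty$ and then apply Borel--Cantelli. The idea is that whenever the walk sits at a vertex of $A_n$ while $e_n$ is still present, the vertical edge dominates the total conductance. So at each such time, the chance of stepping off $A_n$ is exponentially small in $n$. On the other hand, $B_n$ can only be triggered at times $t<T_n=n^2$, which gives polynomially many opportunities. A union bound over those times should therefore suffice.

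\textbf{Step 1: a one-step escape bound.} Fix $n\ge1$ and $t<T_n$, and suppose $X_t\in A_n$. We bound the probability of leaving $A_n$ using the conductances at time $t$.

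If $X_t=n+1$, the edges at $n+1$ are the following:
\begin{itemize}
\item $(n,n+1)$ with conductance $2^n$;
\item $(n+1,n+2)$ with conductance $2^{n+1}$;
\item possibly $e_n=(n+1,-n)$ with conductance $4^n$.
\end{itemize}
The vertex $n+1$ is not the $-$ end of any other vertical edge, because the vertical edges are $(m+1,-m)$ with $m\ge1$. The probability of leaving $A_n$ is therefore at most
\[
\frac{3\cdot 2^n}{4^n+3\cdot2^n}\le 3\cdot 2^{-n}.
\]
If $X_t=-n$, the edges at $-n$ are the following:
\begin{itemize}
\item $(-n-1,-n)$ with conductance $2^{-n-1}$;
\item $(-n,-n+1)$ with conductance $2^{-n}$;
\item $e_n$ with conductance $4^n$.
\end{itemize}
Here the leaving probability is at most $2^{-n}\cdot 4^{-n}$, which is even smaller. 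In both cases we use only that $e_n$ is present for $t<T_n$, so no other knowledge of the process is needed.

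\textbf{Step 2: union bound and Borel--Cantelli.} For each $t$, Step 1 and the Markov property give
\[
\P(X_t\in A_n,\ X_{t+1}\notin A_n)\le \P(X_t\in A_n)\cdot 3\cdot 2^{-n}\le 3\cdot 2^{-n}.
\]
Summing over $0\le t<T_n=n^2$ yields $\P(B_n)\le 3n^2 2^{-n}$. This bound is summable in $n$, so Borel--Cantelli shows that $B_n$ occurs only finitely often almost surely.

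\textbf{Main obstacle.} The only delicate point is Step 1. We must correctly list every edge incident to $n+1$ and to $-n$, including the other vertical edges, to confirm that no second large conductance competes with $e_n$.

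- Vertex $n+1$ is the $+$ end of no other vertical edge: the only vertical edges touching positive integers are $(m+1,-m)$, and $m+1=n+1$ forces $m=n$.
- Vertex $-n$ lies on $e_n$ only.

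Hence the ratio estimate is valid. We also note that the estimate is uniform over the history of the walk, which is what makes the crude union bound legitimate.
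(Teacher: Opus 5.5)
Your proposal is correct and is essentially the paper's argument: a per-step bound of order $2^{-n}$ on leaving $A_n$ while $e_n$ is present, a union bound over the $n^2$ times before $T_n$, and the first Borel--Cantelli lemma. One small slip: at $-n$ the exit probability is bounded by $\tfrac32\,2^{-n}4^{-n}$, not by $2^{-n}4^{-n}$. This does not affect the argument, since that quantity is still below $3\cdot 2^{-n}$.
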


\begin{proof}
    The horizontal edge conductances on $-n$ and $n+1$ are bounded by $2^{n+1}$, so we may bound
    \[  
    \sum_{n=1}^{\infty} \mathbb{P}(B_n) \le \sum_{n=1}^{\infty} \sum_{t=0}^{n^2-1} \frac{2 \cdot 2^{n+1}}{4^n + 2 \cdot 2^{n+1}} \le \sum_{n=1}^{\infty} \frac{n^2}{2^{n-2}} < \infty.
    \]
    Hence by the first Borel--Cantelli lemma, $B_n$ happens finitely often.
\end{proof}

Combining the above lemmas, we see that $X_t$ encounters infinitely many vertical edges $e_n$, of which all but finitely many lead to $X_{T_n} = -n$, as always traversing $e_n$ once encountered ensures by parity that $X_{T_n}=-n$.
This completes the proof of \cref{thm:dec_trans}.

\begin{remark}
    It is also possible to construct examples where we do not send conductances to zero. Indeed, our example can be altered so that the vertical conductances do not vanish, but rather drop to some sufficiently small value, e.g., $2^{-4n}$. However, we have not been able to find an example where the conductances drop only by a constant factor and leave this as an open question; see Question~\ref{question:dec_trans_alpha}.
\end{remark}

\section{A counterexample for decreasing recurrence}\label{sec:dec_rec}
In this section we prove \cref{thm:dec_rec}.

\subsection{Construction}
Fix the following parameters: for $k\geq 1$ and $d\geq 0$,
\begin{equation*}\begin{aligned}
    a_k &= k^{-5/2} \\
    \vartheta &= \frac54 \\
    V(d) &= (d+1)^\vartheta \\
    A(d) &= \sum_{k=1}^\infty a_k\paren{V(d+k)-V(d)} + \sum_{k=1}^d a_k\paren{V(d-k)-V(d)} \\
    R(d) &= \sum_{k=d+1}^\infty a_k.
\end{aligned}\end{equation*}
The choice of $\frac52$ is not essential; more generally, one may take $a_k=k^{-1-\beta}$ whenever $1<\vartheta<\beta<2$, but we shall proceed with these parameters.
The following bounds will be useful.
\begin{lemma}\label{lemma:dec_rec_bounds}
There are absolute constants $K,c,\alpha_*>0$ with $\alpha_*\leq1$ such that, for every $d\in\N$ and $0<\alpha\leq\alpha_*\leq1$,
\begin{align}
  A(d)&\leq K(d+1)^{-1/4},
  \label{eq:dec_rec_A_bound}\\
  R(d)&\geq c(d+1)^{-3/2},
  \label{eq:dec_rec_R_bound}\\
  \alpha A(d)-V(d)R(d)&\leq 0.
  \label{eq:dec_rec_full_superharmonic}
\end{align}
\end{lemma}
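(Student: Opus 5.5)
The plan is to prove \eqref{eq:dec_rec_R_bound} by an integral comparison, to prove \eqref{eq:dec_rec_A_bound} by splitting the sum defining $A(d)$ into ranges of $k$, and then to obtain \eqref{eq:dec_rec_full_superharmonic} by combining the two. The point of the choice $\vartheta=\frac54$ and $a_k=k^{-5/2}$ is that both $A(d)$ and $V(d)R(d)$ scale like $(d+1)^{-1/4}$. Constants $C$ below are absolute and may change from line to line.

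\emph{Bound on $R$.} Since $x\mapsto x^{-5/2}$ is decreasing, each term satisfies $k^{-5/2}\ge\int_k^{k+1}x^{-5/2}\di x$. Summing over $k\ge d+1$ gives
\[ R(d)\ge\int_{d+1}^\infty x^{-5/2}\di x=\tfrac23(d+1)^{-3/2}, \]
so \eqref{eq:dec_rec_R_bound} holds with $c=\frac23$.

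\emph{Bound on $A$.} Only an upper bound is needed, since $A(d)$ may be negative. For $d=0$ we have $A(0)=\sum_k k^{-5/2}\paren{(k+1)^{5/4}-1}<\infty$, so by enlarging $K$ we may assume $d\ge1$. Pair the terms $k\le d$ into second differences and write
\[ A(d)=\sum_{k\le d}a_k\paren{V(d+k)+V(d-k)-2V(d)}+\sum_{k>d}a_k\paren{V(d+k)-V(d)}. \]
I would then treat three ranges of $k$.
\begin{enumerate}[label=(\roman*)]
\item For $k\le d/2$, Taylor's theorem bounds the second difference by $k^2\max_{[d-k,d+k]}V''$. Since $V''(y)=\frac5{16}(y+1)^{-3/4}$ is decreasing, this is at most $Ck^2(d/2+1)^{-3/4}$. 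Summing, $\sum_{k\le d/2}k^{-5/2}\cdot k^2\le Cd^{1/2}$, so this range contributes at most $Cd^{1/2}d^{-3/4}=Cd^{-1/4}$.
\item For $d/2<k\le d$, drop the nonpositive term $V(d-k)-V(d)$. Use $V(d+k)-V(d)\le kV'(2d)\le Ck(d+1)^{1/4}$. Since $\sum_{k>d/2}k^{-3/2}\le Cd^{-1/2}$, this range contributes at most $Cd^{-1/2}(d+1)^{1/4}\le C(d+1)^{-1/4}$.
\item For $k>d$, use $V(d+k)-V(d)\le V(2k)\le Ck^{5/4}$. The tail $\sum_{k>d}k^{-5/4}\le Cd^{-1/4}$ then bounds this range by $C(d+1)^{-1/4}$.
\end{enumerate}
Adding the three ranges gives \eqref{eq:dec_rec_A_bound}.

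\emph{The combination.} By the two bounds, $V(d)R(d)\ge c(d+1)^{5/4-3/2}=c(d+1)^{-1/4}$, while $\alpha A(d)\le\alpha K(d+1)^{-1/4}$. (If $A(d)<0$, then $\alpha A(d)<0\le V(d)R(d)$ directly.) Taking $\alpha_*=\min(1,c/K)$ therefore yields \eqref{eq:dec_rec_full_superharmonic} for every $0<\alpha\le\alpha_*$.

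The main obstacle is range (i) of the bound on $A$, the near-diagonal second differences. A first-order bound there would give $d^{1/4}$ growth instead of decay, so we genuinely need cancellation from the concavity of $V'$. That cancellation is only uniform once $k$ is kept away from $d$, which is why range (i) stops at $k\le d/2$. The remaining cases are crude tail estimates.
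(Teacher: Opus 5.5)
Your proof is correct and follows the paper's argument: the same integral comparison gives $R(d)\ge\frac23(d+1)^{-3/2}$, and $A(d)$ is split into the same three ranges. These are Taylor on the paired second differences for $k\le d/2$, dropping the nonpositive term for $d/2<k\le d$, and the $k^{-5/4}$ tail for $k>d$, followed by the same choice $\alpha_*\le c/K$. The only cosmetic differences are that you bound range (ii) via $kV'(2d)$ instead of the paper's crude $C(d+1)^{5/4}$, and you handle $d=0$ explicitly.
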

\begin{proof}
Throughout the proof, $c$ will denote a sufficiently small absolute constant, and $K$ and $C$ will denote sufficiently large absolute constants; their values may change from line to line.
We have
\[
  R(d)\geq \int_{d+1}^\infty x^{-5/2}\di x
  =\frac23(d+1)^{-3/2},
\]
which implies \eqref{eq:dec_rec_R_bound}.

To prove \eqref{eq:dec_rec_A_bound}, assume $k\geq 3$ and we split the sums into three ranges.
For $1\leq k\leq d/2$, Taylor approximation gives
\[
  V(d+k)+V(d-k)-2V(d)
  \leq C k^2(d+1)^{-3/4}.
\]
Hence this range contributes at most
\[
  C(d+1)^{-3/4}\sum_{k\leq d/2}k^{-1/2}
  \leq C(d+1)^{-1/4}.
\]
For $d/2<k\leq d$, ignoring the negative terms, the summand is at most
$C(d+1)^{5/4}$, while
\[
  \sum_{d/2<k\leq d}k^{-5/2}\leq C(d+1)^{-3/2},
\]
which combines to also yield a contribution bounded by $C(d+1)^{-1/4}$.
Finally, for $k>d$,
\[
  V(d+k)-V(d)\leq V(2k-1) \leq Ck^{5/4},
\]
so
\[
  \sum_{k>d}k^{-5/2}\bigl(V(d+k)-V(d)\bigr)
  \leq C\sum_{k>d}k^{-5/4}
  \leq C(d+1)^{-1/4}.
\]
This proves \eqref{eq:dec_rec_A_bound}.

Since $V(d)=(d+1)^{5/4}$, \eqref{eq:dec_rec_R_bound} implies $V(d)R(d)\geq c(d+1)^{-1/4}$, and thus \eqref{eq:dec_rec_full_superharmonic} holds whenever $\alpha K\leq c$.
\end{proof}
The construction will use the following lemma to attach holding rooms to a random walk on $\N$ to synchronize the random walk to the times when $(c_t)_{t\geq0}$ changes, serving a similar but more complicated role as the bipartite parity effect for the timing of the construction in \cref{sec:dec_trans}, i.e., why $X_{T_n}=-n$ instead of $X_{T_n}=n+1$.
A \emph{pendant room} at the base vertex $n$ is a new vertex $q_n$ joined only to $n$, by an
edge of conductance $M_n$.
We extend every test function from $n$ to $q_n$ by giving the two vertices the same value.
Thus, these pendant rooms do not contribute to the Dirichlet energy, and only delay the walk.

\begin{lemma}\label{lemma:dec_rec_sync}
Consider a base sequence of conductances $(c_t)_{t\geq0}$ on vertex set $\N$.
Suppose the total conductance is at most $D$ at every vertex, uniformly over all time, i.e., $\pi_t(v)\leq D$ for all $t\geq0$ and $v\in\N$.
Suppose that for each $n\in\N$, in the static network $(N,c_n)$, the walk started from $n$ hits $[n+1,\infty)$ almost surely.
Then given positive numbers $(\eps_n)_{n\geq0}$ with $\sum_{n=0}^\infty \eps_n<\frac12$, one can choose
finite room conductances $M_n>0$ for $n\geq 1$ and deterministic integers
\[
  0=T_0<T_1<T_2<\cdots
\]
so that the following holds.
Consider the random walk $(X_t)_{t\geq0}$ in changing environment on extended vertex set $\N\cup\set{q_1,q_2,\dots}$ where pendant room $q_n$ is connected only to base vertex $n$, by an edge of conductance $M_n$, constant across all times; the network within $\N$ at time $t$ is given by conductance $c_n$ for $t\in[T_n,T_{n+1})$.
Then with probability at least $1-2\sum_{n=0}^\infty \eps_n$, the following holds for $(X_t)_{t\geq0}$:
\begin{enumerate}[label=\textup{(\roman*)}]
    \item if $n$ is reached before $T_n$, the walk remains in
          $\{n,q_n\}$ until time $T_n$;
    \item starting from $n$ or $q_n$ at time $T_n$, the walk reaches a base
          vertex larger than $n$ before time $T_{n+1}$;
\end{enumerate}
Under this event, the largest base vertex in $\N$ that the walk has reached tends to infinity.
\end{lemma}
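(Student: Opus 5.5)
We choose $M_n$ and $T_n$ inductively in $n$, so that each index $n$ contributes failure probability at most $2\eps_n$: at most $\eps_n$ for property (i) and at most $\eps_n$ for property (ii). A union bound then gives the stated probability. The idea is that the room $q_n$ acts as a trap with very high conductance, so the walk bounces between $n$ and $q_n$ until time $T_n$. Once $T_n$ arrives, the walk is released into the static network $c_n$ and has plenty of time to move past $n$.

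First we record the effect of a room. While the walk is at $n$, its total conductance is at most $D+M_{n}$ (room $q_{n}$ only; the other rooms attach to other base vertices). Hence the probability of stepping from $n$ to a base neighbour rather than to $q_n$ is at most $D/(D+M_n)$. The walk can only leave $\{n,q_n\}$ through $n$, and from $q_n$ it returns to $n$ deterministically. So over any window of $L$ steps, the probability of leaving $\{n,q_n\}$ is at most $L D/M_n$.

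Now suppose $T_0,\dots,T_n$ and $M_1,\dots,M_{n}$ are fixed; we choose $T_{n+1}$ and then $M_{n+1}$. For property (ii), start the walk at time $T_n$ from $n$ or $q_n$. Between $T_n$ and $T_{n+1}$ it moves according to $c_n$ on $\N$, together with the rooms $q_1,\dots,q_n$, which are already fixed, and the not-yet-chosen room $q_{n+1}$. That room is attached to $n+1>n$, so it is irrelevant before the walk first reaches a base vertex larger than $n$. The rooms $q_m$ with $m\le n$ only insert excursions of the form $m\to q_m\to m$, so they do not change the hitting event. By hypothesis, the static $c_n$-walk started from $n$ hits $[n+1,\infty)$ almost surely; the same holds with the fixed rooms added, and also when starting from $q_n$, since that walk steps to $n$ first. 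Hence the hitting time $H$ is almost surely finite, and there is $L_n$ with $\P(H>L_n)\le\eps_n$ for both starting points. We set $T_{n+1}=T_n+L_n$; it is deterministic.

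For property (i) at index $n+1$: if the walk first reaches $n+1$ at some time $s<T_{n+1}$, we need it to stay in $\{n+1,q_{n+1}\}$ during $[s,T_{n+1}]$. That window has length at most $T_{n+1}$, and $T_{n+1}$ is already fixed. So by the room estimate it suffices to choose $M_{n+1}\ge T_{n+1}D/\eps_{n+1}$, using the strong Markov property at the first arrival at $n+1$. The case $n=0$ needs no room, since $T_0=0$.

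The main obstacle is keeping the order of choices consistent. The window $T_{n+1}$ must be chosen before $M_{n+1}$. This is legitimate only because $M_{n+1}$ does not affect the walk before it reaches $n+1$; after that point, (ii) for index $n$ is already satisfied. Likewise, $L_n$ may depend on $M_1,\dots,M_n$, all of which are chosen earlier. The induction is therefore consistent.

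On the intersection of the good events, (i) and (ii) chain together. Suppose the walk has reached $n$ by time $T_n$ (starting at $0$ for $n=0$). By (i) it is in $\{n,q_n\}$ at time $T_n$. By (ii) it reaches some $m>n$ before $T_{n+1}$, and therefore also before $T_m$. Applying (i) at $m$, it sits in $\{m,q_m\}$ at time $T_m$, and the argument repeats. The record base vertex thus increases strictly infinitely often, so it tends to infinity.
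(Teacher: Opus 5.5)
Your proposal is correct and follows essentially the same route as the paper: the same recursive order $T_n \to M_n \ge DT_n/\eps_n \to L_n \to T_{n+1}$, the same per-step bound $D/M_n$ for leaving $\{n,q_n\}$, a finite $L_n$ obtained from almost-sure hitting of $[n+1,\infty)$ in the frozen phase-$n$ network with rooms $q_1,\dots,q_n$, and a union bound over the two failure modes. The only cosmetic difference is that the paper sets $T_{n+1}=T_n+L_n+1$, so that (ii) holds strictly before $T_{n+1}$. Your $T_{n+1}=T_n+L_n$ only gives arrival by time $T_{n+1}$, but this still suffices for your chaining argument.
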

\begin{proof}
Choose the parameters recursively.
Having chosen $T_n$ for some $n\geq0$, take $M_n \geq \frac{DT_n}{\eps_n}$.
Whenever the walk is at $n$, the probability of using a base edge instead of the room edge is at most $\frac{D}{M_n}$.
Thus, conditional on reaching $n$ before $T_n$, the probability of leaving $\{n,q_n\}$ before $T_n$ is at
most $\eps_n$.
For $n=0$, we do not need to pick an $M_0$, and as $T_0=0$ this part is vacuously true.

In $(\N,c_n)$ the hitting time of $[n+1,\infty)$ is finite, by assumption, where identifying $[n+1,\infty)$ with a single vertex $\dagger$ reduces the network to a finite network, with all vertices connected to $\dagger$.
Thus, attaching the pendant rooms $q_1,q_2,\dots,q_n$ means the resulting network is still finite and connected, so that the hitting time of $[n+1,\infty)$ for the phase-$n$ network of $(X_t)_{t\geq0}$, i.e., the static network at time $t=T_n$, is finite, regardless of the starting location in $\set{0,1,\dots,n,q_1,q_2,\dots,q_n}$.
Hence, choose finite $L_n$ such that, starting from either $n$ or $q_n$, the probability of not hitting $[n+1,\infty)$ by time $L_n$ is at most $\eps_n$.
Set $T_{n+1}=T_n+L_n+1$.
The result then immediately follows.
\end{proof}
We now provide our counterexample construction for \cref{thm:dec_rec}.
Let the base vertex set be $\N$.  Join every pair $i<j$ by one edge of
initial conductance $c_0(i,j)=a_{j-i}=(j-i)^{-5/2}$.
Attach to each $n\in\N$ one pendant room $q_n$, with static conductance $c_t(n,q_n)=M_n$ for all $t\in\N$, to obtain vertex set $V=\N\sqcup\set{q_1,q_2,\dots}$.
Note that while the base degree is infinite, the total base conductance is uniformly bounded:
\[
  \sum_{j\neq i}c_0(i,j)
  \leq 2\sum_{k\geq1}k^{-5/2}
  =:D<\infty.
\]
Fix $\delta>0$ from the statement of \cref{thm:dec_rec}, and choose positive $(\eps_n)_{n\geq0}$ so that $2\sum_{n=0}^\infty \eps_n<\delta.$
Fix $\alpha<\alpha_*$ from \cref{lemma:dec_rec_bounds} and for $i<j$, let
\[ c_t(i,j) = \begin{cases} a_{j-i} & \text{if }t<j \\ \alpha a_{j-i} &\text{if }t\geq j. \end{cases} \]
Then let $(T_n)_{n\geq0}$ and $(M_n)_{n\geq0}$ from \cref{lemma:dec_rec_sync}, with random walk in changing environment $(X_t)_{t\geq0}$.
For simplicity of notation, henceforth forget the original definition of $(c_t)_{t\geq0}$ and let $(c_t)_{t\geq0}$ now denote the environment of $(X_t)_{t\geq0}$.
Then the edge $\{i,j\}$ for $i<j$ decreases conductance once, at time $T_j$:
\[
  c_t(i,j)
  =
  \begin{cases}
    a_{j-i} & \text{if }t<T_j,\\
    \alpha a_{j-i} & \text{if }t\geq T_j.
  \end{cases}
\]
Thus $c_t\downarrow c_\infty$ where $c_\infty(i,j)=\alpha c_0(i,j)$ for $i<j$ and $c_\infty(n,q_n)=c_0(n,q_n)$, so in particular $c_\infty\geq\alpha c_0$.

See \cref{fig:dec_rec} for a schematic of the construction and its intuition.
\begin{figure}[htbp]
\centering
\begin{tikzpicture}[
  scale=1,
  base/.style={circle,fill,inner sep=1.4pt},
  room/.style={circle,draw,inner sep=1.2pt},
  scaled/.style={dashed},
  lbl/.style={font=\small},
  clbl/.style={font=\footnotesize},
  albl/.style={font=\footnotesize,fill=white,inner sep=1pt}
]
% ---------- base vertices ----------
\node[base,label={[clbl]below:$0$}] (v0) at (0,0) {};
\node[base,label={[clbl]below left:$1$}] (v1) at (1.5,0) {};
\node[lbl] at (2.6,0) {$\cdots$};
\node[base,label={[clbl]below left:$k-2$}] (km2) at (3.7,0) {};
\node[base,label={[clbl]below left:$k-1$}] (km1) at (5.2,0) {};
\node[base,label={[clbl]below left:$k$}] (vk) at (6.7,0) {};
\node[base,label={[clbl]below left:$k+1$}] (kp1) at (8.2,0) {};
\node[base,label={[clbl]below left:$k+2$}] (kp2) at (9.7,0) {};
\node[lbl] at (10.65,0) {$\cdots$};
% ---------- nearest-neighbor base edges ----------
\draw[scaled] (v0) -- (v1);
\draw[scaled] (v1) -- (2.2,0) (3.0,0) -- (km2);
\draw[scaled] (km2) -- (km1);
\draw[scaled] (km1) -- node[albl,pos=0.5] {$\alpha a_1$} (vk);
\draw (vk) -- node[albl,pos=0.5] {$a_1$} (kp1);
\draw (kp1) -- (kp2);
\draw (kp2) -- (10.25,0);
% ---------- longer-range edges (a few representatives) ----------
\draw[scaled] (km2) to[bend left=40] node[albl,pos=0.5] {$\alpha a_2$} (vk);
\draw[scaled] (v1) to[bend left=45] node[albl,pos=0.5] {$\alpha a_{k-2}$} (km1);
\draw[scaled] (v0) to[bend left=52] node[albl,pos=0.5] {$\alpha a_{k}$} (vk);
\draw (km1) to[bend left=40] node[albl,pos=0.5] {$a_2$} (kp1);
\draw (vk) to[bend left=40] node[albl,pos=0.5] {$a_2$} (kp2);
\draw (v1) to[bend left=52] node[albl,pos=0.5] {$a_{k}$} (kp1);
% ---------- pendant rooms ----------
\node[room] (q1) at (1.5,-1.15) {\scriptsize $q_1$};
\node[room] (qm2) at (3.7,-1.15) {\scriptsize $q_{k-2}$};
\node[room] (qm1) at (5.2,-1.15) {\scriptsize $q_{k-1}$};
\node[room] (qk) at (6.7,-1.15) {\scriptsize $q_{k}$};
\node[room] (qp1) at (8.2,-1.15) {\scriptsize $q_{k+1}$};
\node[room] (qp2) at (9.7,-1.15) {\scriptsize $q_{k+2}$};
\draw (v1) -- node[right,clbl,pos=0.55] {$M_1$} (q1);
\draw (km2) -- (qm2);
\draw (km1) -- (qm1);
\draw (vk) -- node[right,clbl,pos=0.55] {$M_k$} (qk);
\draw (kp1) -- (qp1);
\draw (kp2) -- (qp2);
\end{tikzpicture}
\caption{A schematic of the construction, shown during phase $k$, i.e., the time interval $[T_k,T_{k+1})$.
The base is the complete graph on $\N$, with initial conductances $c_0(i,j)=a_{|j-i|}=|j-i|^{-5/2}$, and each base vertex $n\geq1$ carries a pendant room $q_n$, joined to $n$ alone by an edge of static conductance $M_n$.
The base edge $\set{i,j}$ with $i<j$ is scaled down exactly once, at time $T_j$, from $a_{j-i}$ to $\alpha a_{j-i}$; thus, during phase $k$, the dashed edges have been scaled down by $\alpha$, while the solid base edges still carry their initial conductances.
The rooms synchronize the walk with the schedule: upon reaching a new maximum $n$ (before time $T_n$), the walk is held in $\set{n,q_n}$ until time $T_n$.
When it is released, in phase $k\geq n$, every base edge leading backward has been scaled by $\alpha$ while the edges to $[k+1,\infty)$ have not; this imbalance pushes the walk to a new maximum, and these maxima tend to infinity almost surely, even though every network $(V,c_t)$, $t\in[0,\infty]$, is recurrent.}
\label{fig:dec_rec}
\end{figure}
\begin{lemma}\label{lemma:dec_rec_rec}
Every network $(V,c_t)$ for $t\in[0,\infty]$ is recurrent.
\end{lemma}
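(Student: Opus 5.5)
Every network $(V,c_t)$ is a base network on $\N$ with pendant rooms attached. Pendant rooms are leaves, so they do not affect recurrence: a walk on $(V,c_t)$ is recurrent if and only if the walk on the base network is recurrent. Indeed, by the extension convention, test functions are constant on $\{n,q_n\}$, so effective resistances between base sets are unchanged. It therefore suffices to show that each base network $(\N,c_t|_\N)$ is recurrent. For every $t\in[0,\infty]$ the base conductances satisfy $\alpha a_{j-i}\le c_t(i,j)\le a_{j-i}$. By Rayleigh monotonicity it is enough to prove that the single network $(\N,c_0)$, with $c_0(i,j)=a_{|j-i|}$, is recurrent; all the other base networks are dominated by it.

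To prove recurrence of $(\N,c_0)$, I would use a Lyapunov-function criterion with $V(d)=(d+1)^{5/4}$. This function tends to infinity and has finite level sets. The criterion states that if $\Delta V\ge 0$ (i.e., $V$ is superharmonic for the walk) outside a finite set, then the walk is recurrent. The proof of the criterion is standard. Stop the walk on hitting the finite set $F$ or leaving $[0,m]$. Optional stopping gives $\P(\text{exit beyond } m\text{ before } F)\le V(x)/\min_{d>m}V(d)\to0$, so the walk hits $F$ almost surely from every start, which gives recurrence for an irreducible chain.

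The generator computation gives $-(\Delta V)(d)=\sum_{j\neq d}a_{|j-d|}(V(j)-V(d))=A(d)$. This is exactly the quantity bounded in \cref{lemma:dec_rec_bounds}. However, that bound only shows $A(d)\le K(d+1)^{-1/4}$, which is positive, not $\le0$. So the plain Lyapunov step fails, and I must instead exploit the point of \eqref{eq:dec_rec_full_superharmonic}.

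For the correct comparison, I would pass to the walk in which the edges from $d$ to the right have their conductances multiplied by $\alpha$ relative to the edges to the left. Rayleigh monotonicity still lets me compare. Concretely, I apply Rayleigh in the form: $c_0\le \alpha^{-1} c_\infty$ and recurrence is invariant under scaling all conductances, so it suffices to handle any single network sandwiched appropriately. The step I expect to be the main obstacle is producing a genuinely superharmonic function. I would first try $W(d)=V(d)$ on a network where jumps to the right of $d$ carry weight $\alpha a_k$ and those to the left carry weight $a_k$. Then \eqref{eq:dec_rec_full_superharmonic}, i.e., $\alpha A(d)\le V(d)R(d)$, should encode exactly the drift inequality: the gain from rightward jumps is at most $\alpha A(d)$, while the missing long leftward jumps past $0$ contribute the loss $V(d)R(d)$. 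Since $c_0$ itself is symmetric, if this does not directly apply I would instead verify recurrence of $(\N,c_0)$ by a Nash--Williams cutset argument. Take cutsets $\Pi_m=\{\{i,j\}:i<m\le j\}$. Their conductance is $\sum_{k}k\,a_k<\infty$ uniformly in $m$, so the cutsets have bounded conductance, though not disjoint. Alternatively, use the classical fact that long-range percolation-type walks with $a_k=k^{-5/2}$ on $\N$ (exponent $s=5/2>2$) are recurrent in dimension one, via the resistance estimate $\Reff(0\oto[m,\infty))\gtrsim\log m$ obtained from the unit voltage $f(d)=\log(d+1)$, whose energy on $[0,m]$ is $O(\log m)$.
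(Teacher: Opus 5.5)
Your reduction is correct and matches the paper's. The rooms are leaves on which test functions are extended constantly, and $\alpha a_{j-i}\le c_t(i,j)\le a_{j-i}$ plus Rayleigh monotonicity reduces everything to recurrence of $(\N,c_0)$. The argument you then build on \eqref{eq:dec_rec_full_superharmonic} is wrong, however. First, $A(d)>0$ for every $d$ by convexity and monotonicity of $V$ (not merely because your upper bound is positive), so $V$ is genuinely subharmonic for $c_0$. Second, the tilted walk that jumps from $d$ to $d+k$ with weight $\alpha a_k$ and to $d-k$ with weight $a_k$ is not the walk of any network. Detailed balance would require a positive $\nu$ with $\nu(d+k)=\alpha\nu(d)$ for all $k\ge1$, and comparing $k=2$ with two steps of $k=1$ forces $\alpha=1$. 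So Rayleigh monotonicity gives no comparison with $c_0$. Third, \eqref{eq:dec_rec_full_superharmonic} is not a drift-toward-$0$ inequality. In \cref{lemma:dec_rec_front}, $d$ is the distance behind the front $n$ in the phase-$n$ environment, and every jump to a vertex $\le n$ carries the reduced weight $\alpha a_k$. The term $-V(d)R(d)$ comes from the unreduced forward jumps of length $k>d$, which overshoot the front and land where $\Phi_n=0$. Jumps that do not exist (your ``missing leftward jumps past $0$'') contribute nothing to a drift. The inequality shows that the walk escapes forward, which is the transience mechanism of \cref{thm:dec_rec}, the opposite of what you need.

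Your Nash--Williams step also has a genuine gap. For $1\le m<m'$ the edge $\set{m-1,m'}$ lies in $\Pi_m\cap\Pi_{m'}$, so no two of your cutsets are disjoint. A single cutset only gives the bounded estimate $\Reff\ge\bigl(\sum_k\min(k,m)a_k\bigr)^{-1}$. This is not a technicality: disjoint cutsets of bounded conductance would give $\Reff(0\oto[N,\infty))\gtrsim N$, whereas here the resistance grows only like $\sqrt N$. It can be repaired by subdividing each edge $\set{i,i+k}$ into a path of $k$ edges of conductance $ka_k$, which leaves the series resistance unchanged. The level cuts are then disjoint with conductance $\sum_k\min(k,m)k^{-3/2}\le4\sqrt m$, giving $\Reff(0\oto[N,\infty))\ge\sum_{m=1}^N\frac{1}{4\sqrt m}\ge\frac{\sqrt N}{4}$.

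The route that works as you stated it is your last one, Dirichlet's principle with an explicit slowly varying test function, and this is essentially the paper's proof. The paper takes $f_N(n)=(1-n/N)_+$, bounds its energy by $\frac1N\sum_{k\le N}k^{-1/2}+N\sum_{k>N}k^{-5/2}\le CN^{-1/2}$, and concludes $\Reff^{c_0}(0\oto N)\ge\sqrt N/C$. For your logarithmic version you must still control the long edges from $[0,m)$ across $m$. Since truncation is $1$-Lipschitz, the energy is at most $\sum_{i\ge0}\sum_{k\ge1}k^{-5/2}\log^2\bigl(1+\frac{k}{i+1}\bigr)\le C\sum_{i\ge0}(i+1)^{-3/2}<\infty$. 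After normalizing you even get $\Reff(0\oto[m,\infty))\gtrsim\log^2 m$. Invoking Chung--Fuchs also works: the walk on $\Z$ with $c(i,j)=\abs{i-j}^{-5/2}$ has mean-zero integrable steps, so it is recurrent, and you restrict to $\N$ by Rayleigh. As written, though, the proposal leads with an invalid argument. The proof should be the reduction followed by one of these resistance estimates.
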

\begin{proof}
By Rayleigh monotonicity, it suffices to show $(V,c_0)$ is recurrent.
For $N\geq1$, define $f_N$ by
\[
  f_N(n)=\paren{1-\frac{n}{N}}_+ = \max\set{1-\frac{n}{N},0},
\]
and recall we extend functions on $\N$ to $V$ via the convention $f_N(q_n)=f_N(n)$ for all $n\in\N$.
Then the Dirichlet energy is given by
\begin{align*}
  \mc{E}_{c_0}(f_N)
  &=\sum_{k=1}^\infty k^{-5/2}
    \sum_{i=0}^\infty \paren{f_N(i+k)-f_N(i)}^2\\
  &\leq \frac1N\sum_{k=1}^N k^{-1/2}
       +N\sum_{k=N+1}^\infty k^{-5/2}\\
  &\leq C N^{-1/2}
\end{align*}
for some large constant $C$, bounding $f_N(i+k)-f_N(i)\leq \frac{k}{N}$ in the first sum across $0 \leq i \leq N-1$, and $f_N(i+k)-f_N(i)\leq 1$ in the second sum across $0\leq i \leq N-1$.
As $f_N(0)=1$ and $f_N(N)=0$, Dirichlet's principle from \cref{subsec:review} implies $\Reff^{c_0}(0\oto N) \geq \frac1C\sqrt{N}$, and thus $\Reff^{c_0}(0\oto\infty)=\infty$, so $c_0$ is recurrent.
\end{proof}

\subsection{Transience}

For $n\geq3$, let $D_n = \floor{\frac{n}3}$ and let $m_n=n-D_n$.
Let $\P_v^{(n)}$ denote the law of the walk in the frozen phase-$n$ environment of $(X_t)_{t\geq0}$, started from the vertex $v\in V$; recall this walk includes the pendant rooms.
For a set $A\subseteq V$, let $\tau_A$ denote the hitting time of $A$.
\begin{lemma}\label{lemma:dec_rec_front}
For every $n\geq3$,
\[
  \P_n^{(n)}
  \left(
    \tau_{[0,m_n]}<\tau_{[n+1,\infty)}
  \right)
  \leq (D_n+1)^{-5/4}.
\]
The same bound holds when the walk starts from $q_n$, i.e., for $\P_{q_n}^{(n)}$.
\end{lemma}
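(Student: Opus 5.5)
The plan is a Lyapunov (supermartingale) argument in the frozen phase-$n$ environment, using the function $V$ and the inequality \eqref{eq:dec_rec_full_superharmonic} of \cref{lemma:dec_rec_bounds}.

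\textbf{The test function.} Define $g$ on $V$ by $g(v)=V(n-v)=(n-v+1)^{5/4}$ for base vertices $v\in[0,n]$ and $g(v)=0$ for $v\geq n+1$. Extend $g$ to the pendant rooms by $g(q_v)=g(v)$, as in the paper's convention. Then $g(n)=g(q_n)=V(0)=1$. Moreover, on $[0,m_n]$ we have $n-v\geq D_n$, so $g\geq V(D_n)=(D_n+1)^{5/4}$ there, since $V$ is increasing.

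\textbf{Superharmonicity.} I will show that $g$ is superharmonic for the phase-$n$ walk at every vertex $v\in[m_n+1,n]$ and at its room $q_v$. At a room this is trivial: the only neighbour of $q_v$ is $v$, which carries the same value.

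At a base vertex $v$, write $d=n-v$ and recall which edges have been scaled in phase $n$. The edge $\{i,j\}$ with $i<j$ is scaled by $\alpha$ exactly when $T_j\leq T_n$, i.e.\ when $j\leq n$.
\begin{itemize}
\item Every edge from $v$ to a base vertex $u\leq n$ has conductance $\alpha a_{|u-v|}$.
\item If $u=v-k$ with $1\leq k\leq v$, the change in $g$ is $V(d+k)-V(d)>0$.
\item If $u=v+k$ with $1\leq k\leq d$, the change is $V(d-k)-V(d)$.
\item Every edge from $v$ to $u=v+k$ with $k>d$ (so $u\geq n+1$) is still unscaled, with conductance $a_k$, and the change in $g$ is $-V(d)$.
\end{itemize}
Summing these contributions, $\sum_u c_n(v,u)(g(u)-g(v))$ is at most
\[
\alpha\Big[\sum_{k=1}^{\infty}a_k\big(V(d+k)-V(d)\big)+\sum_{k=1}^{d}a_k\big(V(d-k)-V(d)\big)\Big]-V(d)R(d)=\alpha A(d)-V(d)R(d)\leq 0.
\]
Extending the first sum from $k\leq v$ to all $k\geq1$ only adds positive terms, so the inequality goes the right way. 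The final step is \eqref{eq:dec_rec_full_superharmonic}, which applies because $\alpha<\alpha_*$.

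\textbf{Optional stopping.} Let $\tau=\tau_{[0,m_n]}\wedge\tau_{[n+1,\infty)}$. Then $g(X_{t\wedge\tau})$ is a nonnegative supermartingale under $\P_n^{(n)}$, and also under $\P_{q_n}^{(n)}$.

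The time $\tau$ is almost surely finite. Before $\tau$ the walk stays in the finite connected set $[m_n+1,n]\cup\{q_{m_n+1},\dots,q_n\}$, each point of which has an edge leaving the set. Since $g$ is bounded on the stopped range, optional stopping gives
\[
1=g(n)\geq \E\,g(X_\tau)\geq V(D_n)\,\P_n^{(n)}\big(\tau_{[0,m_n]}<\tau_{[n+1,\infty)}\big).
\]
Rearranging yields the bound $(D_n+1)^{-5/4}$ of \cref{lemma:dec_rec_front}, and the same argument applies from $q_n$ because $g(q_n)=1$.

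\textbf{Main obstacle.} The delicate point is the bookkeeping in the superharmonicity step. One must check that, in phase $n$, exactly the backward and short forward edges (those to vertices $\leq n$) carry the factor $\alpha$, while all edges crossing to $[n+1,\infty)$ do not. One must also check that truncating at vertex $0$ only removes positive terms, so that the combination reduces exactly to $\alpha A(d)-V(d)R(d)$.
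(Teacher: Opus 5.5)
Your proposal is correct and is essentially the paper's proof. It uses the same test function $V(n-v)$, the same phase-$n$ bookkeeping (edges with larger endpoint $\le n$ are scaled by $\alpha$), the same reduction of the drift at $v=n-d$, $0\le d<D_n$, to $\alpha A(d)-V(d)R(d)\le 0$, and optional stopping from $g(n)=g(q_n)=1$.

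There are two minor differences. The paper caps its function at $V(D_n)$ on $[0,m_n]$, so it needs $\Phi_n(i-k)\le V(d+k)$ for backward jumps and its final step is an equality; your uncapped $g$ makes the drift computation exact and the final step an inequality. Also, the rooms $q_v$ have no edge leaving the window, but exit within two steps has uniformly positive probability, so $\tau<\infty$ still holds.
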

\begin{proof}
Let $(Y_t)_{t\geq0}$ have law $\P_n^{(n)}$.
As $q_n$ is only connected to $n$, the result for $q_n$ immediately follows from the result for $n$.
Define
\[
  \Phi_n(i)=
  \begin{cases}
    V(D_n) & \text{if }i\leq m_n,\\
    V(n-i) & \text{if }m_n<i\leq n,\\
    0 & \text{if }i>n,
  \end{cases}
\]
with $\Phi_n(q_i)=\Phi_n(i)$ for all $i\in\N$.
We claim $\Phi_n(Y_{t\wedge\tau})$ is a bounded supermartingale for $\tau=\tau_{[0,m_n]}\wedge\tau{[n+1,\infty)}$.
Consider $i=n-d$ with $0\leq d<D_n$.
All jumps to $j\leq n$ have their conductance multiplied by $\alpha$, whereas every jump of length $k>d$ to a vertex $j>n$ has unswitched conductance $a_k$ and lands where $\Phi_n=0$.
Thus, conditioned on $\set{Y_t=i}\cap\set{t<\tau}$,
\begin{align*}
    &\E\bracket{\Phi_n(Y_{t+1\wedge\tau})-\Phi_n(Y_{t\wedge\tau})\middle|Y_t=i,t<\tau}
    \\ &= \frac{1}{\pi_n(i)}\paren{\sum_{k=1}^{n-d}\alpha a_k\paren{\Phi_n(i-k)-\Phi_n(i)} + \sum_{k=1}^d\alpha a_k\paren{\Phi_n(i+k)-\Phi_n(i)} - \sum_{k=d+1}^\infty a_k\Phi_n(i)}
    \\ &\leq \frac{1}{\pi_n(i)}\paren{\sum_{k=1}^{n-d}\alpha a_k\paren{V(d+k)-V(d)}+\sum_{k=1}^d\alpha a_k\paren{V(d-k)-V(d)} - \sum_{k=d+1}^\infty a_k V(d)}
    \\ &\leq \frac{1}{\pi_n(i)}\paren{\alpha A(d) - V(d)R(d)}
    \\ &\leq 0
\end{align*}
by \eqref{eq:dec_rec_full_superharmonic}, where $\pi_n$ denotes the total conductance of the frozen phase-$n$ environment of $(X_t)_{t\geq0}$.
As $\tau<\infty$ almost surely, because the interior is finite, optional stopping gives
\[
  1=\Phi_n(n)
  \geq \E\bracket{\Phi_n(Y_\tau)}
  = V(D_n)
  \P_n^{(n)}\paren{\tau_{[0,m_n]}<\tau_{[n+1,\infty)}}.
\]
Since $V(D_n)=(D_n+1)^{5/4}$, this proves the claim.
\end{proof}

Define $\Base:V\to\N$ via $\Base(n)=\Base(q_n)=n$ for all $n\in\N$.
To prove \cref{thm:dec_rec}, it suffices to show that $\Base(X_t)\to\infty$ with probability at least $1-\delta$, as this implies every vertex is visited only finitely often.

Let $G$ be the event from \cref{lemma:dec_rec_sync}, so that $\P(G)\geq1-\delta$.
On $G$, write
\[
  0=N_0<N_1<N_2<\cdots
\]
for the distinct values of $\max_{t\leq T}\Base(X_t)$ across $T\in\N$.
We call $\set{N_0,N_1,\dots}$ the set of \emph{records} of $(X_t)_{t\geq0}$.

For each $n\geq3$, let $A_n$ be the event that $n$ is a record and, from the time $S_n$ that $(X_t)_{t\geq0}$ first reaches $n$, the base coordinate reaches $[0,m_n]$ before the next record is found, i.e., before reaching $[n+1,\infty)$.
On $G$, we have $S_n<T_n$, as some record in $[n+1,\infty)$ must be made before time $T_n$, and $\Base(X_t)=n$ for all $t\in[S_n,T_n]$.
We can naturally couple $(X_t)_{t\in[T_n,T_{n+1})}|X_{T_n}=n$ with $(Y_{t-T_n})_{t\in[T_n,T_{n+1})}\sim\P_n^{(n)}$ and similarly couple $(X_t)_{t\in[T_n,T_{n+1})}|X_{T_n}=q_n$ with $(Y_{t-T_n})_{t\in[T_n,T_{n+1})}\sim\P_{q_n}^{(n)}$.
Then on $G$, the corresponding $(Y_{t-T_n})_{t\in[T_n,T_{n+1})}$ hits $[0,m_n]\cup[n+1,\infty)$ before the process stops at time $t=T_{n+1}$, so
\[ \P(A_n|G) \leq (D_n+1)^{-5/4} \]
by \cref{lemma:dec_rec_front}.
Since
\[
  \sum_{n\geq3}(D_n+1)^{-5/4}<\infty,
\]
the first Borel--Cantelli lemma implies that almost surely on $G$, only finitely many $A_n$ occur.
On $G$, after the last such $A_n$ occurs, say $A_{n_0}$, when the current record is $n>n_0$, we have $\Base(X_t)\geq m_n\geq\frac{2n}{3}$.
As the records $N_i$ tend to infinity, it follows that $\Base(X_t)\to\infty$ on $G$, completing the proof of \cref{thm:dec_rec}.

\begin{remark}
    Our construction is not locally finite: every base vertex has infinite degree.
    One can modify our construction, e.g., by restricting the base edges $\set{i,j}$ for $i<j$ only to $j \leq 2i+1$, to be locally finite, but the resulting analysis is more tedious so for simplicity, we have presented this locally infinite construction.
    It is unclear to us whether a counterexample with uniformly bounded degree or uniformly bounded total conductance, i.e., $\pi_t(v) \leq M$ for all $t$ and $v$, exists; note that in our case, $M_n\to\infty$.
\end{remark}

\section{Open questions}\label{sec:open}
Dembo, Huang, and Sidoravicius~\cite{DHS2014} were interested in a broader notion of monotonicity of recurrence and transience of random walks in monotone non-decreasing environments (where they only considered the graph setting, i.e., $\set{0,1}$-valued conductances, so monotone non-decreasing can be more simply thought of as growing, in the sense of edge sets), a generalized Rayleigh monotonicity principle where both environments can be monotone non-decreasing.
We were unable to establish this conjecture but find it very interesting, and restate it below.
\begin{conjecture}[\protect{\cite[Conjecture 1.8]{DHS2014}}]\label{conj:monotone_trans}
    Let $(G_t)_{t\geq0}$ and $(G_t')_{t\geq0}$ be two deterministic sequences of non-decreasing graphs of uniformly bounded degrees on the same countable vertex set $V$, with $G_t \subseteq G_t'$ for all $t$.
    Let $(X_t)_{t\geq0}$ and $(Y_t)_{t\geq0}$ be the corresponding walks on $(G_t)_{t\geq0}$ and $(G_t')_{t\geq0}$, respectively, both starting at $v_0$.
    Then if $(X_t)_{t\geq0}$ is transient, in the sense that it almost surely returns to $v_0$ finitely often, then so is $(Y_t)_{t\geq0}$.
\end{conjecture}
Note that because irreducible time-homogeneous random walks satisfy a 0-1 law for recurrence vs.\ transience, i.e., the probability that a random walk returns finitely often is either 0 or 1, \cref{conj:monotone_trans} immediately implies \cref{conj:dhs}, though of course \cref{conj:dhs} is now proven by \cref{thm:inc_rec}.
It is natural to wish to generalize this conjecture to the conductance setting, and we believe this can be done, but one must be cautious to preserve the uniformly bounded degree assumption in some form, e.g., uniformly bounded total conductance at each vertex.
Dembo, Huang, and Sidoravicius~\cite[Remark 1.9]{DHS2014} noted that \cref{conj:monotone_trans} does not hold without the uniformly bounded degree assumption; we provide a simpler example below.
\begin{example}\label{ex:inc_trans_unbounded}
    Let $V=\Z^3$ with $c_0\equiv1$ the uniform network corresponding to simple random walk on $\Z^3$, which is transient.
    We view $\Z^2$ as a subset of $\Z^3$ by identifying it with the plane $z=0$.
    Now let $c_t(e)=1$ for all $t\geq0$ and all edges $e$ not lying in the plane $z=0$, and for edges $e$ lying in the plane, let $c_t(e)=2^t$.
    So $(c_t)_{t\geq0}$ is monotone non-decreasing, with $c_0$ transient (and thus $c_t$ transient for all $t$ via Rayleigh monotonicity).
    Then for the random walk $(X_t)_{t\geq0}$ on $(c_t)_{t\geq0}$, the first Borel--Cantelli lemma implies that $(X_t)_{t\geq0}$ leaves the plane $z=0$ finitely often.
    Upon leaving the plane, $(X_t)_{t\geq0}$ behaves as simple random walk on $\Z^3$ until returning to the plane, and just considering the $z$-coordinate, as simple random walk on $\Z$ is recurrent, each time $(X_t)_{t\geq0}$ leaves the plane it almost surely returns to the plane.
    Thus, as $(X_t)_{t\geq0}$ leaves the plane finitely often, it eventually remains on the plane forever, in which case it behaves as simple random walk on $\Z^2$, which is recurrent.
    Thus, $(X_t)_{t\geq0}$ is recurrent in the sense that it visits any vertex in the plane $z=0$ infinitely often.

    This example demonstrates that the existence of the limit $c_\infty$ is needed for \cref{conj:inc_trans}, or equivalently \cref{thm:inc_trans}.
    And, renaming this random walk as $(Y_t)_{t\geq0}$ and letting $(X_t)_{t\geq0}$ be the simple random walk on $\Z^3$, i.e., on $c_0$, this serves as a counterexample for the conductance generalization of \cref{conj:monotone_trans} without any analog of the uniformly bounded degree assumption.
\end{example}
Because the 0-1 law does not hold in general for random walks in changing environments, the following recurrence variant of \cref{conj:monotone_trans} is a nontrivially different ``contrapositive''.
\begin{conjecture}\label{conj:monotone_rec}
    Let $(G_t)_{t\geq0}$ and $(G_t')_{t\geq0}$ be two deterministic sequences of non-decreasing graphs of uniformly bounded degrees on the same countable vertex set $V$, with $G_t \subseteq G_t'$ for all $t$.
    Let $(X_t)_{t\geq0}$ and $(Y_t)_{t\geq0}$ be the corresponding walks on $(G_t)_{t\geq0}$ and $(G_t')_{t\geq0}$, respectively, both starting at $v_0$.
    Then if $(Y_t)_{t\geq0}$ is recurrent, in the sense that it almost surely visits $v_0$ infinitely often, then so is $(X_t)_{t\geq0}$.
\end{conjecture}
Our current framework is unable to address these conjectures; in some sense, our current framework is agnostic to the growth rate of the conductances $(c_t)_{t\geq0}$, or in the $\set{0,1}$-valued setting the graphs $(G_t)_{t\geq0}$.
However, the growth rate is a critical phenomenon in \cref{conj:monotone_trans,conj:monotone_rec}; the work of Dembo, Huang, and Sidoravicius~\cite{DHS2014} shows that one can have transient $c_\infty$, yet $(c_t)_{t\geq0}$ grows sufficiently slowly that $(X_t)_{t\geq0}$ on $(c_t)_{t\geq0}$ is recurrent.

Note that \cref{conj:dec_rec} assumed $c_\infty\geq\alpha c_0$, so that all networks were within a constant factor of each other.
\cref{conj:dec_trans} does not make such an assumption.
While both \cref{conj:dec_trans,conj:dec_rec} are false, we do not know whether \cref{conj:dec_trans} can be repaired or not by assuming such a constant-factor assumption.
\begin{question}\label{question:dec_trans_alpha}
    Suppose random walk $(X_t)_{t\geq0}$ in changing environment corresponding to deterministic conductances $(c_t)_{t\geq0}$ has $(c_t)_{t\geq0}$ non-increasing with limit $c_\infty$, i.e., $c_t\downarrow c_\infty$.
    If $c_\infty\geq\alpha c_0$ for some $\alpha>0$ and random walk corresponding to $c_\infty$ is irreducible and transient, must $(X_t)_{t\geq0}$ also be transient, in the sense that almost surely the walk visits every vertex finitely often?
\end{question}
Lastly, we remark that our counterexample for \cref{conj:dec_rec} in \cref{thm:dec_rec} is not as strong as it could potentially be, and we leave open the question of whether or not a stronger counterexample can be found.
In particular, our counterexample only shows $(X_t)_{t\geq0}$ is transient with probability at least $1-\delta$, and one could hope to find a counterexample where $(X_t)_{t\geq0}$ is almost surely transient.

\section*{Acknowledgments}
We would like to thank Amir Dembo for introducing us to this problem and providing helpful discussions.
Rupert Li was partially supported by a Hertz Fellowship and a PD Soros Fellowship.
Jiyun Park was partially funded by NSF grant DMS-2348142 and a grant from the Simons Foundation International [SFI-MPS-SDF-00014916].
Claude and ChatGPT were used as auxiliary tools while preparing this manuscript.
All statements, arguments, and proofs were verified and either written or revised by the authors, who take full responsibility.

\bibliographystyle{abbrvurl}
\bibliography{ref}

\end{document}